\author{Samuele Giraudo\addressmark{1}}
\title{Balanced binary trees in the Tamari lattice}
\address{\addressmark{1}Institut Gaspard Monge, Universit\'e Paris-Est
Marne-la-Vall\'ee, 5 Boulevard Descartes, Champs-sur-Marne,
77454 Marne-la-Vall\'ee cedex 2, France}
\keywords{balanced trees, Tamari lattice, posets, grammars, generating series, combinatorics}
\newtheorem{theorem}{Theorem}[section]
\newtheorem{proposition}[theorem]{Proposition}
\newtheorem{lemma}[theorem]{Lemma}
\newtheorem{definition}[theorem]{Definition}
\newtheorem{remark}[theorem]{Remark}
\numberwithin{equation}{section}
\newcommand{\Ht}{\operatorname{ht}}
\newcommand{\EnsNat}{\mathbb{N}}
\newcommand{\EnsRel}{\mathbb{Z}}
\newcommand{\EnsArb}{\mathcal{T}}
\newcommand{\ArbCons}{\wedge}
\newcommand{\EnsEq}{\mathcal{B}}
\newcommand{\TamPar}{\rightthreetimes}
\newcommand{\TamOrd}{\preccurlyeq}
\newcommand{\Tam}{\mathbb{T}}
\newcommand{\Des}{\gamma}
\newcommand{\HyperCube}{\mathbb{H}}
\newcommand{\Potentiel}{\operatorname{P}}
\newcommand{\PropDes}{\operatorname{Imb}}
\newcommand{\ADroite}{\rightsquigarrow}
\newcommand{\EnsAdmissible}{\mathcal{A}}
\newcommand{\MotCar}{\operatorname{c}}
\tikzstyle{Noeud} = [circle, draw = blue!100, fill = blue!30, thick, inner sep = 0pt, minimum size = 6mm]
\tikzstyle{Feuille} = [rectangle, draw = black!100, fill = black!30, thick, inner sep = 0pt, minimum size = 2mm]
\tikzstyle{Bourgeon} = [circle, draw = green!100, fill = green!30, thick, inner sep = 0pt, minimum size = 6mm]
\tikzstyle{SArbre} = [rectangle, draw = orange!100, fill = orange!30, thick, inner sep = 0pt, minimum size = 8mm]
\tikzstyle{NoeudR} = [rectangle, draw = blue!100, fill = blue!30, thick, inner sep = 0pt, minimum size = 6mm]
\tikzstyle{Cercle} = [circle, draw = red!100, fill = red!70, thick, inner sep = 0pt, minimum size = 3mm]
\tikzstyle{ArreteCube} = [line width=1pt, draw=blue]
\tikzstyle{Arrete} = [red!70, thick, draw, line width = 2pt]
\tikzstyle{edge from parent} = [red!70, thick, draw, line width = 2pt]
\tikzstyle{level} = [level distance = 10mm]
\tikzstyle{level 1} = [sibling distance=24mm]
\tikzstyle{level 2} = [sibling distance=12mm]
\tikzstyle{level 3} = [sibling distance=6mm]
\newcommand{\Feuille}[1]{%
    \scalebox{#1}{%
    \begin{tikzpicture}
        \node[Feuille]{};
    \end{tikzpicture}%
    }%
}
\newcommand{\Noeud}[2]{%
    \scalebox{#1}{%
        \begin{tikzpicture}%
            \node[Noeud]{\Large #2};%
        \end{tikzpicture}%
    }%
}
\newcommand{\Bourgeon}[2]{%
    \scalebox{#1}{%
        \begin{tikzpicture}%
            \node[Bourgeon]{\Large #2};%
        \end{tikzpicture}%
    }%
}
\newcommand{\BourgeonA}[4]{%
    \scalebox{#1}{%
    \begin{tikzpicture}%
        \node[Bourgeon](1)at(0,0){\Large  #2};%
        \node[Noeud](2)at(1,1){\Large  #3};%
        \node[Bourgeon](3)at(2,0){\Large  #4};%
        \draw[Arrete](1)--(2);%
        \draw[Arrete](2)--(3);%
    \end{tikzpicture}%
    }%
}
\newcommand{\BourgeonB}[4]{%
    \scalebox{#1}{%
    \begin{tikzpicture}%
        \tikzstyle{NoeudR} = [rectangle, draw = blue!100, fill = blue!30, thick, inner sep = 0pt, minimum size = 6mm]%
        \node[Bourgeon](0)at(0,-2){\Large $x$};%
        \node[Noeud](1)at(1,-1){\Large #2};%
        \draw[Arrete](1)--(0);%
        \node[Bourgeon](2)at(2,-2){\Large #3};%
        \draw[Arrete](1)--(2);%
        \node[NoeudR](3)at(3,0){\Large #4};%
        \draw[Arrete](3)--(1);%
        \node[Bourgeon](4)at(4,-1){\Large $x$};%
        \draw[Arrete](3)--(4);%
    \end{tikzpicture}%
    }%
}
\newcommand{\MotifA}[3]{%
    \scalebox{#1}{%
    \begin{tikzpicture}%
        \tikzstyle{Noeud} = [circle, draw = blue!100, fill = blue!30, thick, inner sep = 0pt, minimum size = 6mm]
        \node[Noeud](0)at(0,-1){\large #2};%
        \node[Noeud](1)at(1,0){\large #3};%
        \draw[Arrete](1)--(0);%
    \end{tikzpicture}%
    }%
}
\newcommand{\MotifB}[3]{%
    \scalebox{#1}{%
    \begin{tikzpicture}%
        \tikzstyle{Noeud} = [circle, draw = blue!100, fill = blue!30, thick, inner sep = 0pt, minimum size = 6mm]
        \node[Noeud](0)at(0,0){\large #2};%
        \node[Noeud](1)at(1,-1){\large #3};%
        \draw[Arrete](0)--(1);
    \end{tikzpicture}%
    }%
}
\newcommand{\ArbreA}[1]{%
    \scalebox{#1}{%
    \begin{tikzpicture}%
        \node[Feuille](0)at(0,-1){};%
        \node[Noeud](1)at(1,0){};%
        \node[Feuille](2)at(2,-1){};%
        \draw[Arrete](1)--(0);%
        \draw[Arrete](1)--(2);%
    \end{tikzpicture}%
    }%
}
\newcommand{\ArbreB}[1]{%
    \scalebox{#1}{%
    \begin{tikzpicture}%
        \node[Feuille](0)at(0,-2){};%
        \node[Noeud](1)at(1,-1){};%
        \node[Feuille](2)at(2,-2){};%
        \draw[Arrete](1)--(0);%
        \draw[Arrete](1)--(2);%
        \node[Noeud](3)at(3,0){};%
        \node[Feuille](4)at(4,-1){};%
        \draw[Arrete](3)--(1);%
        \draw[Arrete](3)--(4);%
    \end{tikzpicture}%
    }%
}
\begin{document}

\maketitle

\begin{abstract}
    $\newline$
    \paragraph{Abstract.}
    We show that the set of balanced binary trees is closed by interval
    in the Tamari lattice. We establish that the intervals $[T_0, T_1]$
    where $T_0$ and $T_1$ are balanced trees are isomorphic as posets to
    a hypercube. We introduce tree patterns and synchronous grammars to
    get a functional equation of the generating series enumerating balanced
    tree intervals.

    \paragraph{R\'esum\'e.}
    Nous montrons que l'ensemble des arbres \'equilibr\'es est clos par
    intervalle dans le treillis de Tamari. Nous caract\'erisons la forme
    des intervalles du type $[T_0, T_1]$ o\`u $T_0$ et $T_1$ sont \'equilibr\'es
    en montrant qu'en tant qu'ensembles partiellement ordonn\'es, ils sont
    isomorphes \`a un hypercube. Nous introduisons la notion de motif
    d'arbre et de grammaire synchrone dans le but d'\'etablir une
    \'equation fonctionnelle de la s\'erie g\'en\'eratrice qui d\'enombre
    les intervalles d'arbres \'equilibr\'es.
\end{abstract}

\section{Introduction}

Binary search trees are used as data structures to represent dynamic totally ordered
sets~\cite{KNU398, CLR04, AU93}. The algorithms solving classical related
problems such as the insertion, the deletion or the search of a given
element can be performed in a time logarithmic in the cardinality of the
represented set, provided that the encoding binary tree is balanced. Recall
that a binary tree is balanced if for each node $x$, the height of the
left subtree of $x$ and the height of the right subtree of $x$ differ by
at most one.

The algorithmic of balanced trees relies fundamentally on the so-called
rotation operation. An insertion or a deletion of an element in a dynamic
ordered set modifies the tree encoding it and can imbalance it. The
efficiency of these algorithms comes from the fact that binary search
trees can be rebalanced very quickly after the insertion or the deletion,
using no more than two rotations~\cite{AVL62}.

Surprisingly, this operation appears in a different context since it defines
a partial order on the set of binary trees of a given size. A tree $T_0$
is smaller than a tree $T_1$ if it is possible to transform the tree $T_0$
into the tree $T_1$ by performing a succession of right rotations. This
partial order, known as the Tamari order~\cite{KNU44, STA99}, defines a
lattice structure on the set of binary trees of a given size.

Since binary trees are naturally equipped with this order structure induced
by rotations, and the balance of balanced trees is maintained doing rotations,
we would like to investigate if balanced trees play a particular role in
the Tamari lattice. Our goal, in this is paper, is to combine the two
points of view of the rotation operation. A first simple computer observation is
that the intervals $[T_0,~T_1]$ where $T_0$ and $T_1$ are balanced trees
are only made up of balanced trees. The main goal of this paper is to prove
this property. As a consequence, we give a characterization on the shape of
these intervals and, using grammars allowing to generate trees, enumerate them.

This article is organized as follows. In Section \ref{Sec:Preliminaries},
we set the essential notions about binary trees and balanced trees, and
we give the definition of the Tamari lattice in our setting. Section
\ref{Sec:Closure} is devoted to establish the main result: the set of
balanced trees is closed by interval in the Tamari lattice. In Section
\ref{Sec:Grammars}, we define tree patterns and synchronous grammars. These
grammars allow us to generate trees avoiding a given set of tree patterns.
We define a subset of balanced trees where elements hold a peculiar position
in the Tamari lattice and we give, using the synchronous grammar generating
these, a functional equation of the generating series enumerating these.
Finally, in Section \ref{Sec:Shape}, we look at balanced tree intervals
and show that they are, as posets, isomorphic to hypercubes. Encoding balanced
tree intervals by particular trees, and establishing the synchronous grammar
generating these trees, we give a functional equation satisfied by the
generating series enumerating balanced tree intervals.

\section*{Acknowledgments}
The author would like to thank Florent Hivert for introducing him to the
problem addressed in this paper, and Jean-Christophe Novelli and Florent
Hivert for their invaluable advice and their improvement suggestions. The
computations of this work have been done with the open-source mathematical
software Sage~\cite{SAGE}.

\section{Preliminaries} \label{Sec:Preliminaries}

\subsection{Complete rooted planar binary trees}

In this article, we consider complete rooted planar binary trees. Nodes
are denoted by circles like \Noeud{.4}{} and leaves by
squares like \raisebox{0.2em}{\Feuille{.4}}. The empty tree is also denoted by \raisebox{0.2em}{\Feuille{.4}}.
Assuming $L$ and $R$ are complete rooted planar binary trees, let $L \ArbCons R$
be the (unique) complete rooted planar binary tree which has $L$ as left
subtree and $R$ as right subtree. Let also $\EnsArb_n$ be the set of complete
rooted planar binary trees with $n$ nodes and $\EnsArb$ be the set of all
complete rooted planar binary trees. We use in the sequel the standard
terminology (ie. \emph{child}, \emph{ancestor}, \emph{edge}, \emph{path},
\ldots) about complete rooted planar binary trees ~\cite{AU93}.

Recall that the nodes of a complete rooted planar binary tree $T$ can be
visited in the infix order: it consists in visiting recursively the left
subtree of $T$, then the root, and finally the right subtree. We say that
a node $y$ is \emph{on the right compared to} a node $x$ in $T$
if the node $x$ appears strictly before the node $y$ in the infix order
and we denote that by $x \ADroite_T y$. We extend this notation to subtrees saying that
a subtree $S$ of root $y$ of $T$ is on the right compared to a node $x$
in $T$ if for all nodes $y'$ of $S$ we have $x \ADroite_T y'$. We say that
a node $x$ of $T$ is the \emph{leftmost node} of $T$ if $x$ is the first
visited node in the infix order.

If $T$ is a complete rooted planar binary tree, we shall denote by $\Ht(T)$
the \emph{height} of $T$, that is the length of the longest path connecting
the root of $T$ to one of its leaves. For example, we have $\Ht \left( \raisebox{0.2em}{\Feuille{.4}} \right) = 0$,
$\Ht \left( \ArbreA{.17} \right) = 1$, and $\Ht \left( \ArbreB{.17} \right) = 2$.

In the sequel, we shall mainly talk about complete rooted planar binary
trees so we shall call them simply \emph{trees}.

\subsection{Balanced trees}

Let us define, for each tree $T$, the mapping $\Des_T$ called the \emph{imbalance
mapping} which associates an element of $\EnsRel$ with a node $x$ of $T$,
namely the \emph{imbalance value} of $x$. It is defined for a node $x$ by:
\begin{equation}
    \Des_T(x) = \Ht(R) - \Ht(L)
\end{equation}
where $L$ (resp. $R$) is the left (resp. right) subtree of $x$.

Balanced trees form a subset of $\EnsArb$ composed of trees which have
the property of being balanced:

\begin{definition}
    A tree $T$ is \emph{balanced} if for all node $x$ of $T$, we have
    \begin{equation}
        \Des_T(x) \in \{-1, 0, 1\}.
    \end{equation}
\end{definition}

Let us denote by $\EnsEq_n$ the set of balanced trees with $n$ nodes (see
Figure \ref{FIGarbresEq} for the first sets) and $\EnsEq$ the set of all
balanced trees.

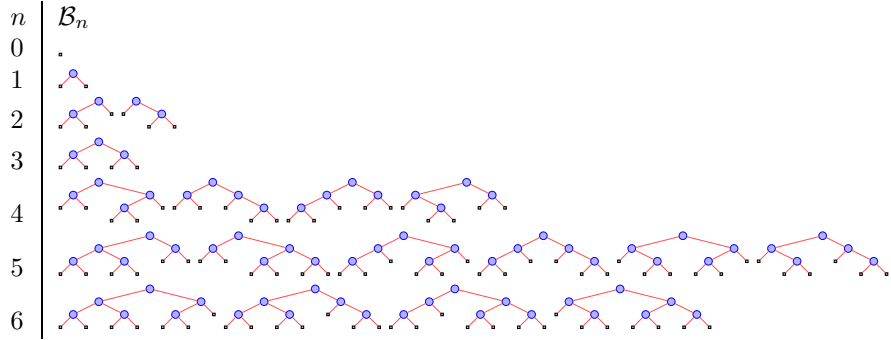
\begin{figure}[ht]
    \centering
        \begin{tabular}{l|l}
            $n$ & $\EnsEq_n$ \\
            $0$ & \scalebox{.17}{
                \begin{tikzpicture}
                    \node[Feuille]{};
                \end{tikzpicture}
            } \\
            $1$ & \scalebox{.17}{
                \begin{tikzpicture}
                    \node[Feuille](0)at(0,-1){};
                    \node[Noeud](1)at(1,0){};
                    \node[Feuille](2)at(2,-1){};
                    \draw[Arrete](1)--(0);
                    \draw[Arrete](1)--(2);
                \end{tikzpicture}
            } \\
            $2$ & \scalebox{.17}{
                \begin{tikzpicture}
                    \node[Feuille](0)at(0,-2){};
                    \node[Noeud](1)at(1,-1){};
                    \node[Feuille](2)at(2,-2){};
                    \draw[Arrete](1)--(0);
                    \draw[Arrete](1)--(2);
                    \node[Noeud](3)at(3,0){};
                    \node[Feuille](4)at(4,-1){};
                    \draw[Arrete](3)--(1);
                    \draw[Arrete](3)--(4);
                \end{tikzpicture}
            }
                  \scalebox{.17}{
                \begin{tikzpicture}
                    \node[Feuille](0)at(0,-1){};
                    \node[Noeud](1)at(1,0){};
                    \node[Feuille](2)at(2,-2){};
                    \node[Noeud](3)at(3,-1){};
                    \node[Feuille](4)at(4,-2){};
                    \draw[Arrete](3)--(2);
                    \draw[Arrete](3)--(4);
                    \draw[Arrete](1)--(0);
                    \draw[Arrete](1)--(3);
                \end{tikzpicture}
            } \\
            $3$ & \scalebox{.17}{
                \begin{tikzpicture}
                    \node[Feuille](0)at(0,-2){};
                    \node[Noeud](1)at(1,-1){};
                    \node[Feuille](2)at(2,-2){};
                    \draw[Arrete](1)--(0);
                    \draw[Arrete](1)--(2);
                    \node[Noeud](3)at(3,0){};
                    \node[Feuille](4)at(4,-2){};
                    \node[Noeud](5)at(5,-1){};
                    \node[Feuille](6)at(6,-2){};
                    \draw[Arrete](5)--(4);
                    \draw[Arrete](5)--(6);
                    \draw[Arrete](3)--(1);
                    \draw[Arrete](3)--(5);
                \end{tikzpicture}
            } \\
            $4$ & \scalebox{.17}{
                \begin{tikzpicture}
                    \node[Feuille](0)at(0,-2){};
                    \node[Noeud](1)at(1,-1){};
                    \node[Feuille](2)at(2,-2){};
                    \draw[Arrete](1)--(0);
                    \draw[Arrete](1)--(2);
                    \node[Noeud](3)at(3,0){};
                    \node[Feuille](4)at(4,-3){};
                    \node[Noeud](5)at(5,-2){};
                    \node[Feuille](6)at(6,-3){};
                    \draw[Arrete](5)--(4);
                    \draw[Arrete](5)--(6);
                    \node[Noeud](7)at(7,-1){};
                    \node[Feuille](8)at(8,-2){};
                    \draw[Arrete](7)--(5);
                    \draw[Arrete](7)--(8);
                    \draw[Arrete](3)--(1);
                    \draw[Arrete](3)--(7);
                \end{tikzpicture}
            }
                  \scalebox{.17}{
                \begin{tikzpicture}
                    \node[Feuille](0)at(0,-2){};
                    \node[Noeud](1)at(1,-1){};
                    \node[Feuille](2)at(2,-2){};
                    \draw[Arrete](1)--(0);
                    \draw[Arrete](1)--(2);
                    \node[Noeud](3)at(3,0){};
                    \node[Feuille](4)at(4,-2){};
                    \node[Noeud](5)at(5,-1){};
                    \node[Feuille](6)at(6,-3){};
                    \node[Noeud](7)at(7,-2){};
                    \node[Feuille](8)at(8,-3){};
                    \draw[Arrete](7)--(6);
                    \draw[Arrete](7)--(8);
                    \draw[Arrete](5)--(4);
                    \draw[Arrete](5)--(7);
                    \draw[Arrete](3)--(1);
                    \draw[Arrete](3)--(5);
                \end{tikzpicture}
                  }
                  \scalebox{.17}{
                \begin{tikzpicture}
                    \node[Feuille](0)at(0,-3){};
                    \node[Noeud](1)at(1,-2){};
                    \node[Feuille](2)at(2,-3){};
                    \draw[Arrete](1)--(0);
                    \draw[Arrete](1)--(2);
                    \node[Noeud](3)at(3,-1){};
                    \node[Feuille](4)at(4,-2){};
                    \draw[Arrete](3)--(1);
                    \draw[Arrete](3)--(4);
                    \node[Noeud](5)at(5,0){};
                    \node[Feuille](6)at(6,-2){};
                    \node[Noeud](7)at(7,-1){};
                    \node[Feuille](8)at(8,-2){};
                    \draw[Arrete](7)--(6);
                    \draw[Arrete](7)--(8);
                    \draw[Arrete](5)--(3);
                    \draw[Arrete](5)--(7);
                \end{tikzpicture}
                  }
                  \scalebox{.17}{
                  \begin{tikzpicture}
                    \node[Feuille](0)at(0,-2){};
                    \node[Noeud](1)at(1,-1){};
                    \node[Feuille](2)at(2,-3){};
                    \node[Noeud](3)at(3,-2){};
                    \node[Feuille](4)at(4,-3){};
                    \draw[Arrete](3)--(2);
                    \draw[Arrete](3)--(4);
                    \draw[Arrete](1)--(0);
                    \draw[Arrete](1)--(3);
                    \node[Noeud](5)at(5,0){};
                    \node[Feuille](6)at(6,-2){};
                    \node[Noeud](7)at(7,-1){};
                    \node[Feuille](8)at(8,-2){};
                    \draw[Arrete](7)--(6);
                    \draw[Arrete](7)--(8);
                    \draw[Arrete](5)--(1);
                    \draw[Arrete](5)--(7);
                \end{tikzpicture}
                  } \\
            $5$ & \scalebox{.17}{
                \begin{tikzpicture}
                    \node[Feuille](0)at(0,-3){};
                    \node[Noeud](1)at(1,-2){};
                    \node[Feuille](2)at(2,-3){};
                    \draw[Arrete](1)--(0);
                    \draw[Arrete](1)--(2);
                    \node[Noeud](3)at(3,-1){};
                    \node[Feuille](4)at(4,-3){};
                    \node[Noeud](5)at(5,-2){};
                    \node[Feuille](6)at(6,-3){};
                    \draw[Arrete](5)--(4);
                    \draw[Arrete](5)--(6);
                    \draw[Arrete](3)--(1);
                    \draw[Arrete](3)--(5);
                    \node[Noeud](7)at(7,0){};
                    \node[Feuille](8)at(8,-2){};
                    \node[Noeud](9)at(9,-1){};
                    \node[Feuille](10)at(10,-2){};
                    \draw[Arrete](9)--(8);
                    \draw[Arrete](9)--(10);
                    \draw[Arrete](7)--(3);
                    \draw[Arrete](7)--(9);
                \end{tikzpicture}
                }
                  \scalebox{.17}{
                  \begin{tikzpicture}
                    \node[Feuille](0)at(0,-2){};
                    \node[Noeud](1)at(1,-1){};
                    \node[Feuille](2)at(2,-2){};
                    \draw[Arrete](1)--(0);
                    \draw[Arrete](1)--(2);
                    \node[Noeud](3)at(3,0){};
                    \node[Feuille](4)at(4,-3){};
                    \node[Noeud](5)at(5,-2){};
                    \node[Feuille](6)at(6,-3){};
                    \draw[Arrete](5)--(4);
                    \draw[Arrete](5)--(6);
                    \node[Noeud](7)at(7,-1){};
                    \node[Feuille](8)at(8,-3){};
                    \node[Noeud](9)at(9,-2){};
                    \node[Feuille](10)at(10,-3){};
                    \draw[Arrete](9)--(8);
                    \draw[Arrete](9)--(10);
                    \draw[Arrete](7)--(5);
                    \draw[Arrete](7)--(9);
                    \draw[Arrete](3)--(1);
                    \draw[Arrete](3)--(7);
                \end{tikzpicture}
                  }
                  \scalebox{.17}{
                  \begin{tikzpicture}
                    \node[Feuille](0)at(0,-3){};
                    \node[Noeud](1)at(1,-2){};
                    \node[Feuille](2)at(2,-3){};
                    \draw[Arrete](1)--(0);
                    \draw[Arrete](1)--(2);
                    \node[Noeud](3)at(3,-1){};
                    \node[Feuille](4)at(4,-2){};
                    \draw[Arrete](3)--(1);
                    \draw[Arrete](3)--(4);
                    \node[Noeud](5)at(5,0){};
                    \node[Feuille](6)at(6,-3){};
                    \node[Noeud](7)at(7,-2){};
                    \node[Feuille](8)at(8,-3){};
                    \draw[Arrete](7)--(6);
                    \draw[Arrete](7)--(8);
                    \node[Noeud](9)at(9,-1){};
                    \node[Feuille](10)at(10,-2){};
                    \draw[Arrete](9)--(7);
                    \draw[Arrete](9)--(10);
                    \draw[Arrete](5)--(3);
                    \draw[Arrete](5)--(9);
                \end{tikzpicture}
                  }
                  \scalebox{.17}{
                \begin{tikzpicture}
                    \node[Feuille](0)at(0,-3){};
                    \node[Noeud](1)at(1,-2){};
                    \node[Feuille](2)at(2,-3){};
                    \draw[Arrete](1)--(0);
                    \draw[Arrete](1)--(2);
                    \node[Noeud](3)at(3,-1){};
                    \node[Feuille](4)at(4,-2){};
                    \draw[Arrete](3)--(1);
                    \draw[Arrete](3)--(4);
                    \node[Noeud](5)at(5,0){};
                    \node[Feuille](6)at(6,-2){};
                    \node[Noeud](7)at(7,-1){};
                    \node[Feuille](8)at(8,-3){};
                    \node[Noeud](9)at(9,-2){};
                    \node[Feuille](10)at(10,-3){};
                    \draw[Arrete](9)--(8);
                    \draw[Arrete](9)--(10);
                    \draw[Arrete](7)--(6);
                    \draw[Arrete](7)--(9);
                    \draw[Arrete](5)--(3);
                    \draw[Arrete](5)--(7);
                \end{tikzpicture}
                  }
                  \scalebox{.17}{
                \begin{tikzpicture}
                    \node[Feuille](0)at(0,-2){};
                    \node[Noeud](1)at(1,-1){};
                    \node[Feuille](2)at(2,-3){};
                    \node[Noeud](3)at(3,-2){};
                    \node[Feuille](4)at(4,-3){};
                    \draw[Arrete](3)--(2);
                    \draw[Arrete](3)--(4);
                    \draw[Arrete](1)--(0);
                    \draw[Arrete](1)--(3);
                    \node[Noeud](5)at(5,0){};
                    \node[Feuille](6)at(6,-3){};
                    \node[Noeud](7)at(7,-2){};
                    \node[Feuille](8)at(8,-3){};
                    \draw[Arrete](7)--(6);
                    \draw[Arrete](7)--(8);
                    \node[Noeud](9)at(9,-1){};
                    \node[Feuille](10)at(10,-2){};
                    \draw[Arrete](9)--(7);
                    \draw[Arrete](9)--(10);
                    \draw[Arrete](5)--(1);
                    \draw[Arrete](5)--(9);
                \end{tikzpicture}
                  }
                  \scalebox{.17}{
                  \begin{tikzpicture}
                    \node[Feuille](0)at(0,-2){};
                    \node[Noeud](1)at(1,-1){};
                    \node[Feuille](2)at(2,-3){};
                    \node[Noeud](3)at(3,-2){};
                    \node[Feuille](4)at(4,-3){};
                    \draw[Arrete](3)--(2);
                    \draw[Arrete](3)--(4);
                    \draw[Arrete](1)--(0);
                    \draw[Arrete](1)--(3);
                    \node[Noeud](5)at(5,0){};
                    \node[Feuille](6)at(6,-2){};
                    \node[Noeud](7)at(7,-1){};
                    \node[Feuille](8)at(8,-3){};
                    \node[Noeud](9)at(9,-2){};
                    \node[Feuille](10)at(10,-3){};
                    \draw[Arrete](9)--(8);
                    \draw[Arrete](9)--(10);
                    \draw[Arrete](7)--(6);
                    \draw[Arrete](7)--(9);
                    \draw[Arrete](5)--(1);
                    \draw[Arrete](5)--(7);
                \end{tikzpicture}
                  } \\
            $6$ & \scalebox{.17}{
                \begin{tikzpicture}
                    \node[Feuille](0)at(0,-3){};
                    \node[Noeud](1)at(1,-2){};
                    \node[Feuille](2)at(2,-3){};
                    \draw[Arrete](1)--(0);
                    \draw[Arrete](1)--(2);
                    \node[Noeud](3)at(3,-1){};
                    \node[Feuille](4)at(4,-3){};
                    \node[Noeud](5)at(5,-2){};
                    \node[Feuille](6)at(6,-3){};
                    \draw[Arrete](5)--(4);
                    \draw[Arrete](5)--(6);
                    \draw[Arrete](3)--(1);
                    \draw[Arrete](3)--(5);
                    \node[Noeud](7)at(7,0){};
                    \node[Feuille](8)at(8,-3){};
                    \node[Noeud](9)at(9,-2){};
                    \node[Feuille](10)at(10,-3){};
                    \draw[Arrete](9)--(8);
                    \draw[Arrete](9)--(10);
                    \node[Noeud](11)at(11,-1){};
                    \node[Feuille](12)at(12,-2){};
                    \draw[Arrete](11)--(9);
                    \draw[Arrete](11)--(12);
                    \draw[Arrete](7)--(3);
                    \draw[Arrete](7)--(11);
                \end{tikzpicture}
                }
                  \scalebox{.17}{
                    \begin{tikzpicture}
                        \node[Feuille](0)at(0,-3){};
                        \node[Noeud](1)at(1,-2){};
                        \node[Feuille](2)at(2,-3){};
                        \draw[Arrete](1)--(0);
                        \draw[Arrete](1)--(2);
                        \node[Noeud](3)at(3,-1){};
                        \node[Feuille](4)at(4,-3){};
                        \node[Noeud](5)at(5,-2){};
                        \node[Feuille](6)at(6,-3){};
                        \draw[Arrete](5)--(4);
                        \draw[Arrete](5)--(6);
                        \draw[Arrete](3)--(1);
                        \draw[Arrete](3)--(5);
                        \node[Noeud](7)at(7,0){};
                        \node[Feuille](8)at(8,-2){};
                        \node[Noeud](9)at(9,-1){};
                        \node[Feuille](10)at(10,-3){};
                        \node[Noeud](11)at(11,-2){};
                        \node[Feuille](12)at(12,-3){};
                        \draw[Arrete](11)--(10);
                        \draw[Arrete](11)--(12);
                        \draw[Arrete](9)--(8);
                        \draw[Arrete](9)--(11);
                        \draw[Arrete](7)--(3);
                        \draw[Arrete](7)--(9);
                    \end{tikzpicture}
                  }
                  \scalebox{.17}{
                    \begin{tikzpicture}
                        \node[Feuille](0)at(0,-3){};
                        \node[Noeud](1)at(1,-2){};
                        \node[Feuille](2)at(2,-3){};
                        \draw[Arrete](1)--(0);
                        \draw[Arrete](1)--(2);
                        \node[Noeud](3)at(3,-1){};
                        \node[Feuille](4)at(4,-2){};
                        \draw[Arrete](3)--(1);
                        \draw[Arrete](3)--(4);
                        \node[Noeud](5)at(5,0){};
                        \node[Feuille](6)at(6,-3){};
                        \node[Noeud](7)at(7,-2){};
                        \node[Feuille](8)at(8,-3){};
                        \draw[Arrete](7)--(6);
                        \draw[Arrete](7)--(8);
                        \node[Noeud](9)at(9,-1){};
                        \node[Feuille](10)at(10,-3){};
                        \node[Noeud](11)at(11,-2){};
                        \node[Feuille](12)at(12,-3){};
                        \draw[Arrete](11)--(10);
                        \draw[Arrete](11)--(12);
                        \draw[Arrete](9)--(7);
                        \draw[Arrete](9)--(11);
                        \draw[Arrete](5)--(3);
                        \draw[Arrete](5)--(9);
                    \end{tikzpicture}
                  }
                  \scalebox{.17}{
                  \begin{tikzpicture}
                    \node[Feuille](0)at(0,-2){};
                    \node[Noeud](1)at(1,-1){};
                    \node[Feuille](2)at(2,-3){};
                    \node[Noeud](3)at(3,-2){};
                    \node[Feuille](4)at(4,-3){};
                    \draw[Arrete](3)--(2);
                    \draw[Arrete](3)--(4);
                    \draw[Arrete](1)--(0);
                    \draw[Arrete](1)--(3);
                    \node[Noeud](5)at(5,0){};
                    \node[Feuille](6)at(6,-3){};
                    \node[Noeud](7)at(7,-2){};
                    \node[Feuille](8)at(8,-3){};
                    \draw[Arrete](7)--(6);
                    \draw[Arrete](7)--(8);
                    \node[Noeud](9)at(9,-1){};
                    \node[Feuille](10)at(10,-3){};
                    \node[Noeud](11)at(11,-2){};
                    \node[Feuille](12)at(12,-3){};
                    \draw[Arrete](11)--(10);
                    \draw[Arrete](11)--(12);
                    \draw[Arrete](9)--(7);
                    \draw[Arrete](9)--(11);
                    \draw[Arrete](5)--(1);
                    \draw[Arrete](5)--(9);
                \end{tikzpicture}
                  }
        \end{tabular}
    \caption{The first balanced trees.}
    \label{FIGarbresEq}
\end{figure}

\subsection{The Tamari lattice}

The Tamari lattice can be defined in several ways~\cite{STA99, BEB07}
depending on which kind of catalan object (ie. in bijection with trees)
the order relation is defined. We give here the most convenient definition
for our use. First, let us recall the right rotation operation:

\begin{definition}
    Let $T_0$ be a tree and $S_0 = (A \ArbCons B) \ArbCons C$ be the subtree
    of root $y$ of $T_0$. If $T_1$ is the tree obtained by replacing the
    tree $S_0$ by the tree $A \ArbCons (B \ArbCons C)$ in $T_0$ (see
    Figure \ref{FIGrotation}), we say that $T_1$ is obtained from $T_0$
    by a \emph{right rotation} of \emph{root} $y$.
\end{definition}

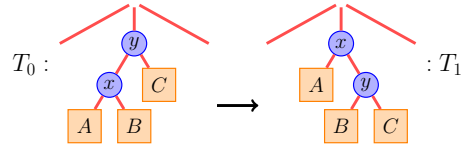
\begin{figure}[ht]
    \centering
        \scalebox{.55}{
            \begin{tikzpicture}
                \tikzstyle{level 1}=[sibling distance=18mm]
                \tikzstyle{level 2}=[sibling distance=12mm]
                \tikzstyle{level 3}=[sibling distance=12mm]
                \node[] {}
                    child[Noeud]
                    child{node[Noeud]{\Large $y$}
                        child{node[Noeud]{\Large $x$}
                            child{node[SArbre]{\Large $A$}
                                child[missing]
                                child[missing]
                            }
                            child{node[SArbre]{\Large $B$}
                                child[missing]
                                child[missing]
                            }
                        }
                        child{node[SArbre]{\Large $C$}
                            child[missing]
                            child[missing]
                        }
                    }
                    child[Noeud]
                ;
                \draw[line width=2pt, ->] (2,-2.5) -- (3,-2.5);
                \node[] at (5, 0) {}
                    child[Noeud]
                    child{node[Noeud] {\Large $x$}
                        child{node[SArbre]{\Large $A$}
                            child[missing]
                            child[missing]
                        }
                        child{node[Noeud]{\Large $y$}
                            child{node[SArbre]{\Large $B$}
                                child[missing]
                                child[missing]
                            }
                            child{node[SArbre]{\Large $C$}
                                child[missing]
                                child[missing]
                            }
                        }
                    }
                    child[Noeud]
                ;
                \node at (-2.5, -1.5) {\LARGE $T_0:$};
                \node at (7.5, -1.5) {\LARGE $:T_1$};
            \end{tikzpicture}
        }
    \caption{The right rotation of root $y$.}
    \label{FIGrotation}
\end{figure}

We write $T_0 \TamPar T_1$ if $T_1$ can be obtained by a right rotation
from $T_0$. We call the relation $\TamPar$ the \emph{partial Tamari relation}.

\begin{remark} \label{REMrotInf}
    Applying a right rotation to a tree does not change the infix order
    of its nodes.
\end{remark}

In the sequel, we only talk about right rotations, so we call these simply
\emph{rotations}. We are now in a position to give our definition of the
Tamari relation:

\begin{definition}
    The \emph{Tamari relation}, written $\TamOrd$, is the reflexive and
    transitive closure of the partial Tamari relation $\TamPar$.
\end{definition}

\begin{figure}[ht]
    \centering
    \subfigure[$\Tam_3$]{\makebox[5cm]{\scalebox{.14}{
        \begin{tikzpicture}
            \tikzstyle{Cercle} = [circle,draw=cyan,inner sep=0pt,thick,minimum size = 55mm, line width=4pt]
            \tikzstyle{Arc} = [line width=4pt, -, draw=blue]
            \node[Noeud] at (0, -1) (arbre111) {}
                child{node[Noeud] (arbre111n1) {}
                        child{node[Noeud] (arbre111n2) {}
                                child{node[Feuille] (arbre111f1) {}}
                                child{node[Feuille] (arbre111f2) {}}
                    }
                        child{node[Feuille] (arbre111f3) {}}
                }
                child{node[Feuille] (arbre111f4) {}};
            \node[Noeud] at (5, 8) (arbre121) {}
                child{node[Noeud] (arbre121n1) {}
                        child{node[Feuille] (arbre121f1) {}}
                        child{node[Feuille] (arbre121f2) {}}
                }
                child{node[Noeud] (arbre121n2) {}
                        child{node[Feuille] (arbre121f3) {}}
                        child{node[Feuille] (arbre121f4) {}}
                };
            \node[Noeud] at (-5, 4) (arbre211) {}
                child{node[Noeud] (arbre211n1) {}
                        child{node[Feuille] (arbre211f1) {}}
                        child{node[Noeud] (arbre211n2) {}
                                child{node[Feuille] (arbre211f2) {}}
                                child{node[Feuille] (arbre211f3) {}}
                    }
                }
                child{node[Feuille] (arbre211f4) {}};
            \node[Noeud] at (-5.55, 12) (arbre311) {}
                child{node[Feuille] (arbre311f1) {}}
                child{node[Noeud] (arbre311n1) {}
                        child{node[Noeud] (arbre311n2) {}
                                child{node[Feuille] (arbre311f2) {}}
                                child{node[Feuille] (arbre311f3) {}}
                    }
                        child{node[Feuille] (arbre311f4) {}}
                };
            \node[Noeud] at (0, 17) (arbre321) {}
                child{node[Feuille] (arbre321f1) {}}
                child{node[Noeud] (arbre321n1) {}
                        child{node[Feuille] (arbre321f2) {}}
                        child{node[Noeud] (arbre321n2) {}
                                child{node[Feuille] (arbre321f3) {}}
                                child{node[Feuille] (arbre321f4) {}}
                    }
                };
            \node[Cercle,fit=(arbre111) (arbre111n1) (arbre111n2) (arbre111f1) (arbre111f2) (arbre111f3) (arbre111f4)] (cercle111) {};
            \node[Cercle,fit=(arbre121) (arbre121n1) (arbre121n2) (arbre121f1) (arbre121f2) (arbre121f3) (arbre121f4)] (cercle121) {};
            \node[Cercle,fit=(arbre211) (arbre211n1) (arbre211n2) (arbre211f1) (arbre211f2) (arbre211f3) (arbre211f4)] (cercle211) {};
            \node[Cercle,fit=(arbre311) (arbre311n1) (arbre311n2) (arbre311f1) (arbre311f2) (arbre311f3) (arbre311f4)] (cercle311) {};
            \node[Cercle,fit=(arbre321) (arbre321n1) (arbre321n2) (arbre321f1) (arbre321f2) (arbre321f3) (arbre321f4)] (cercle321) {};
            \draw[Arc] (cercle111) -- (cercle121);
            \draw[Arc] (cercle111) -- (cercle211);
            \draw[Arc] (cercle121) -- (cercle321);
            \draw[Arc] (cercle211) -- (cercle311);
            \draw[Arc] (cercle311) -- (cercle321);
        \end{tikzpicture}
    }}}
    \subfigure[$\Tam_4$]{\makebox[7cm]{\scalebox{.14}{
        \begin{tikzpicture}
            \tikzstyle{Cercle} = [circle,draw=cyan,inner sep=0pt,thick,minimum size = 60mm, line width=4pt]
            \tikzstyle{Arc} = [line width=4pt, -, draw=blue]
            \node[Noeud] at (0, 0) (1111) {}
                child{node[Noeud] (11111) {}
                        child{node[Noeud](11112) {}
                                child{node[Noeud] (11113) {}
                                        child{node[Feuille] (11114) {}}
                                        child{node[Feuille] (11115) {}}
                        }
                                child{node[Feuille] (11116) {}}
                    }
                        child{node[Feuille] (11117) {}}
                }
                child{node[Feuille] (11118) {}};
            \node[Cercle,fit=(1111)(11111)(11112)(11113)(11114)(11115)(11116)(11117)] (c1111) {};
            \node[Noeud] at (-.75, 11) (1121) {}
                child{node[Noeud] (11211) {}
                        child{node[Noeud] (11212) {}
                                child{node[Feuille] (11213) {}}
                                child{node[Feuille] (11214) {}}
                    }
                        child{node[Feuille] (11215) {}}
                }
                child{node[Noeud] (11216) {}
                        child{node[Feuille] (11217) {}}
                        child{node[Feuille] (11218) {}}
                };
            \node[Cercle,fit=(1121)(11211)(11212)(11213)(11214)(11215)(11216)(11217)] (c1121) {};
            \draw[Arc] (c1111) -- (c1121);
            \node[Noeud] at (10, 6) (1211) {}
                child{node[Noeud] (12111) {}
                        child{node[Noeud] (12112) {}
                                child{node[Feuille] (12113) {}}
                                child{node[Feuille] (12114) {}}
                    }
                        child{node[Noeud] (12115) {}
                                child{node[Feuille] (12116) {}}
                                child{node[Feuille] (12117) {}}
                    }
                }
                child{node[Feuille] (12118) {}};
            \node[Cercle,fit=(1211)(12111)(12112)(12113)(12114)(12115)(12116)(12117)] (c1211) {};
            \draw[Arc] (c1111) -- (c1211);
            \node[Noeud] at (20, 12) (3211) {}
                child{node[Noeud] (32111) {}
                        child{node[Feuille] (32112) {}}
                        child{node[Noeud] (32113) {}
                                child{node[Feuille] (32114) {}}
                                child{node[Noeud] (32115) {}
                                        child{node[Feuille] (32116) {}}
                                        child{node[Feuille] (32117) {}}
                        }
                    }
                }
                child{node[Feuille] (32118) {}};
            \node[Cercle,fit=(3211)(32111)(32112)(32113)(32114)(32115)(32116)(32117)] (c3211) {};
            \draw[Arc] (c1211) -- (c3211);
            \node[Noeud] at (-10, 2) (2111) {}
                child{node[Noeud] (21111) {}
                        child{node[Noeud] (21112) {}
                                child{node[Feuille] (21113) {}}
                                child{node[Noeud] (21114) {}
                                        child{node[Feuille] (21115) {}}
                                        child{node[Feuille] (21116) {}}
                        }
                    }
                        child{node[Feuille] (21117) {}}
                }
                child{node[Feuille] (21118) {}};
            \node[Cercle,fit=(2111)(21111)(21112)(21113)(21114)(21115)(21116)(21117)] (c2111) {};
            \draw[Arc] (c1111) -- (c2111);
            \node[Noeud] at (-20, 4) (3111) {}
                child{node[Noeud] (31111) {}
                        child{node[Feuille] (31112) {}}
                        child{node[Noeud] (31113) {}
                                child{node[Noeud] (31114) {}
                                        child{node[Feuille] (31115) {}}
                                        child{node[Feuille] (31116) {}}
                        }
                                child{node[Feuille] (31117) {}}
                    }
                }
                child{node[Feuille] (31118) {}};
            \node[Cercle,fit=(3111)(31111)(31112)(31113)(31114)(31115)(31116)(31117)] (c3111) {};
            \draw[Arc] (c2111) -- (c3111);
            \draw[Arc] (c3111) -- (c3211);
            \node[Noeud] at (9.3, 14) (1311) {}
                child{node[Noeud] (13111) {}
                        child{node[Feuille] (13112) {}}
                        child{node[Feuille] (13113) {}}
                }
                child{node[Noeud] (13114) {}
                        child{node[Noeud] (13115) {}
                                child{node[Feuille] (13116) {}}
                                child{node[Feuille] (13117) {}}
                    }
                        child{node[Feuille] (13118) {}}
                };
            \node[Cercle,fit=(1311)(13111)(13112)(13113)(13114)(13115)(13116)(13117)] (c1311) {};
            \draw[Arc] (c1211) -- (c1311);
            \node[Noeud] at (1, 21) (1321) {}
                child{node[Noeud] (13211) {}
                        child{node[Feuille] (13212) {}}
                        child{node[Feuille] (13213) {}}
                }
                child{node[Noeud] (13214) {}
                        child{node[Feuille] (13215) {}}
                        child{node[Noeud] (13216) {}
                                child{node[Feuille] (13217) {}}
                                child{node[Feuille] (13218) {}}
                    }
                };
            \node[Cercle,fit=(1321)(13211)(13212)(13213)(13214)(13215)(13216)(13217)] (c1321) {};
            \draw[Arc] (c1311) -- (c1321);
            \draw[Arc] (c1121) -- (c1321);
            \node[Noeud] at (-10.75, 19) (2121) {}
                child{node[Noeud] (21211) {}
                        child{node[Feuille] (21212) {}}
                        child{node[Noeud] (21213) {}
                                child{node[Feuille] (21214) {}}
                                child{node[Feuille] (21215) {}}
                    }
                }
                child{node[Noeud] (21216) {}
                        child{node[Feuille] (21217) {}}
                        child{node[Feuille] (21218) {}}
                };
            \node[Cercle,fit=(2121)(21211)(21212)(21213)(21214)(21215)(21216)(21217)] (c2121) {};
            \draw[Arc] (c2111) -- (c2121);
            \draw[Arc] (c1121) -- (c2121);
            \node[Noeud] at (-20.9, 21) (4111) {}
                child{node[Feuille] (41111) {}}
                child{node[Noeud] (41112) {}
                        child{node[Noeud] (41113) {}
                                child{node[Noeud] (41114) {}
                                        child{node[Feuille] (41115) {}}
                                        child{node[Feuille] (41116) {}}
                        }
                                child{node[Feuille] (41117) {}}
                    }
                        child{node[Feuille] (41118) {}}
                };
            \node[Cercle,fit=(4111)(41111)(41112)(41113)(41114)(41115)(41116)(41117)] (c4111) {};
            \draw[Arc] (c3111) -- (c4111);
            \node[Noeud] at (19.1, 29) (4211) {}
            child{node[Feuille] (42111) {}}
            child{node[Noeud] (42112) {}
                    child{node[Noeud] (42113) {}
                            child{node[Feuille] (42114) {}}
                            child{node[Noeud] (42115) {}
                                    child{node[Feuille] (42116) {}}
                                    child{node[Feuille] (42117) {}}
                    }
                }
                    child{node[Feuille] (42118) {}}
            };
            \node[Cercle,fit=(4211)(42111)(42112)(42113)(42114)(42115)(42116)(42117)] (c4211) {};
            \draw[Arc] (c4111) -- (c4211);
            \draw[Arc] (c3211) -- (c4211);
            \node[Noeud] at (-11.35, 29) (4121) {}
                child{node[Feuille] (41211) {}}
                child{node[Noeud] (41212) {}
                        child{node[Noeud] (41213) {}
                                child{node[Feuille] (41214) {}}
                                child{node[Feuille] (41215) {}}
                    }
                        child{node[Noeud] (41216) {}
                                child{node[Feuille] (41217) {}}
                                child{node[Feuille] (41218) {}}
                    }
                };
            \node[Cercle,fit=(4121)(41211)(41212)(41213)(41214)(41215)(41216)(41217)] (c4121) {};
            \draw[Arc] (c4111) -- (c4121);
            \draw[Arc] (c2121) -- (c4121);
            \node[Noeud] at (8.7, 34) (4311) {}
                child{node[Feuille] (43111) {}}
                child{node[Noeud] (43112) {}
                        child{node[Feuille] (43113) {}}
                        child{node[Noeud] (43114) {}
                                child{node[Noeud] (43115) {}
                                        child{node[Feuille] (43116) {}}
                                        child{node[Feuille] (43117) {}}
                        }
                                child{node[Feuille] (43118) {}}
                    }
                };
            \node[Cercle,fit=(4311)(43111)(43112)(43113)(43114)(43115)(43116)(43117)] (c4311) {};
            \draw[Arc] (c1311) -- (c4311);
            \draw[Arc] (c4211) -- (c4311);
            \node[Noeud] at (0.55, 38) (4321) {}
                child{node[Feuille] (43211) {}}
                child{node[Noeud] (43212) {}
                        child{node[Feuille] (43213) {}}
                        child{node[Noeud] (43214) {}
                                child{node[Feuille] (43215) {}}
                                child{node[Noeud] (43216) {}
                                        child{node[Feuille] (43217) {}}
                                        child{node[Feuille] (43218) {}}
                        }
                    }
                };
            \node[Cercle,fit=(4321)(43211)(43212)(43213)(43214)(43215)(43216)(43217)] (c4321) {};
            \draw[Arc] (c4121) -- (c4321);
            \draw[Arc] (c1321) -- (c4321);
            \draw[Arc] (c4311) -- (c4321);
        \end{tikzpicture}
    }}}
    \caption{The Tamari lattices $\Tam_3$ and $\Tam_4$.}
    \label{FIG_Tamari_3_4}
\end{figure}

The Tamari relation is an order relation. For $n \geq 0$, the set $\EnsArb_n$
with the $\TamOrd$ order relation defines a lattice: the Tamari lattice. We
denote by $\Tam_n = (\EnsArb_n, \TamOrd)$ the Tamari lattice of order $n$.

\section{Closure by interval of the set of balanced trees} \label{Sec:Closure}

\subsection{Rotations and balance}

Let us first consider the modifications of the imbalance values of the nodes
of a tree $T_0 = (A \ArbCons B) \ArbCons C$ when a rotation at its root is
applied. Let $T_1$ be the tree obtained by this rotation, $y$ the root of
$T_0$ and $x$ the left child of $y$ in $T_0$. Note first that the imbalance
values of the nodes of the trees $A$, $B$ and $C$ are not modified by the
rotation. Indeed, only the imbalance values of the nodes $x$ and $y$ are
changed. Since $T_0$ is balanced, we have $\Des_{T_0}(x) \in \{-1, 0, 1\}$
and $\Des_{T_0}(y) \in \{-1, 0, 1\}$. Thus, the pair $(\Des_{T_0}(x), \Des_{T_0}(y))$
can take nine different values. Here follows the list of the imbalance
values of the nodes $x$ and $y$ in the trees $T_0$ and $T_1$:
\begin{table}[ht]
    \centering
    \begin{tabular}{c|ccccccccc}
                                         & (B1)              & (U1)    & (U2)    & (B2)             & (U3)   & (U4)   & (U5)    & (U6)   & (U7)   \\ \hline
        $\left(\Des_{T_0}(x), \Des_{T_0}(y)\right)$ & \textbf{(-1, -1)} & (-1, 0) & (-1, 1) & \textbf{(0, -1)} & (0, 0) & (0, 1) & (1, -1) & (1, 0) & (1, 1) \\
        $\left(\Des_{T_1}(x), \Des_{T_1}(y)\right)$ & \textbf{(1, 1)}   & (2, 2)  & (3, 3)  & \textbf{(1, 0)}  & (2, 1) & (3, 2) & (2, 0)  & (3, 1) & (4, 2)
    \end{tabular}
    \caption{Imbalance values of the nodes $x$ and $y$ in $T_0$ and $T_1$.}
\end{table}

Notice that only in (B1) and (B2) the tree $T_1$ is balanced. We have the
following lemma:

\begin{lemma} \label{LEMhauteurEgaleEq}
    Let $T_0$ and $T_1$ be two balanced trees such that $T_0 \TamPar T_1$.
    Then, the trees $T_0$ and $T_1$ have the same height.
\end{lemma}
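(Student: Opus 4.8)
The plan is to show that a single right rotation between two balanced trees preserves the height, and then to combine this with the local analysis already carried out in the table above. Recall the setup: we have $T_0 = (A \ArbCons B) \ArbCons C$ with root $y$ and left child $x$, and $T_1 = A \ArbCons (B \ArbCons C)$ is obtained by the rotation. The key observation, which I would state first, is that the rotation is entirely \emph{local}: it modifies only the nodes $x$ and $y$, and the subtrees $A$, $B$, $C$ are carried along unchanged. Consequently $\Ht(T_0)$ and $\Ht(T_1)$ are both determined by the heights $\Ht(A)$, $\Ht(B)$, $\Ht(C)$ together with the local structure.

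\noindent\textbf{Reducing to the subtree at $y$.} Since the rotation acts inside the subtree of root $y$ and leaves everything above $y$ untouched, it suffices to prove that the subtree $S_0 = (A \ArbCons B) \ArbCons C$ and its image $S_1 = A \ArbCons (B \ArbCons C)$ have the same height; the equality $\Ht(T_0) = \Ht(T_1)$ then follows because the height of $T_0$ is a fixed function (a maximum over root-to-leaf paths) of $\Ht(S_0)$ and the unchanged surrounding structure, and likewise for $T_1$ with $\Ht(S_1)$. So the whole problem is the one-line computation of $\Ht(S_0)$ versus $\Ht(S_1)$ in terms of $a := \Ht(A)$, $b := \Ht(B)$, $c := \Ht(C)$.

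\noindent\textbf{The core case analysis.} Here I would invoke the table preceding the lemma. Of the nine possible pairs $(\Des_{T_0}(x), \Des_{T_0}(y))$, only cases (B1), with $(\Des_{T_0}(x),\Des_{T_0}(y)) = (-1,-1)$, and (B2), with $(0,-1)$, keep $T_1$ balanced. Since $T_0$ and $T_1$ are assumed balanced, we are forced into one of these two cases, and I would treat each explicitly. In both cases the imbalance conditions pin down the relative heights of $A$, $B$, $C$, and I would simply compute. For instance, $\Des_{T_0}(y) = -1$ means $\Ht(C) = \Ht(A \ArbCons B) - 1 = (1 + \max(a,b)) - 1 = \max(a,b)$; combined with the value of $\Des_{T_0}(x)$ this determines $a$, $b$, $c$ up to the needed precision. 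One then checks directly that
\begin{equation}
    \Ht(S_0) = 1 + \max\bigl(1 + \max(a,b),\, c\bigr)
    = 1 + \max\bigl(a,\, 1 + \max(b,c)\bigr) = \Ht(S_1),
\end{equation}
so the two subtrees share a common height in each admissible case.

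\noindent\textbf{Expected obstacle.} There is no deep obstacle: the argument is a finite verification once the reduction to the local subtree is made, and the table has already isolated the only two relevant configurations. The one point requiring slight care is the bookkeeping in the arithmetic — making sure that the imbalance constraints from \emph{both} $x$ and $y$ are used so that $\max(a,b)$ and $\max(b,c)$ are correctly evaluated in each of (B1) and (B2), and confirming that the two expressions for $\Ht(S_0)$ and $\Ht(S_1)$ genuinely coincide rather than merely agreeing in one subcase. I would therefore write out cases (B1) and (B2) separately, extract the exact relations among $a$, $b$, $c$ in each, and substitute to confirm $\Ht(S_0) = \Ht(S_1)$.
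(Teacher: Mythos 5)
Your proof is correct and takes essentially the same approach as the paper's: both reduce to the subtree at the rotation root, use the preceding table to force the configuration into cases (B1) or (B2), verify that $\Ht(S_0) = \Ht(S_1)$ there, and conclude $\Ht(T_0) = \Ht(T_1)$ by locality of the rotation. You merely spell out the height computation (in terms of $\Ht(A)$, $\Ht(B)$, $\Ht(C)$ under the imbalance constraints) that the paper leaves implicit, and your arithmetic checks out in both cases.
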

\begin{proof}
    Since $T_0$ and $T_1$ are both balanced, the rotation modifies a subtree
    $S_0$ of $T_0$ such that the imbalance values of the root of $S_0$,
    namely $y$, and the left child of $y$, namely $x$, satisfy (B1) or (B2).
    Let $S_1$ be the tree obtained by the rotation of root $y$ from $S_0$.
    Computing the height of the trees $S_0$ and $S_1$, we have $\Ht(S_0) = \Ht(S_1)$.
    Thus, as a rotation modifies a tree locally, we have $\Ht(T_0) = \Ht(T_1)$.
\end{proof}

A rotation transforming a tree $T_0$ into a tree $T_1$ is a \emph{conservative
balancing rotation} if both $T_0$ and $T_1$ are balanced. Considering $y$
the root of this rotation and $x$ the left child of $y$, we see, by the
previous computations and Lemma \ref{LEMhauteurEgaleEq}, that $T_0$ and
$T_1$ are both balanced if and only if $T_0$ is balanced and
\begin{equation}
    (\Des_{T_0}(x), \Des_{T_0}(y)) \in \{(-1, -1), (0, -1)\}.
\end{equation}
Similarly, a rotation is an \emph{unbalancing rotation} if $T_0$ is balanced
but $T_1$ not.

\begin{lemma} \label{LEMrotationDes}
    Let $T_0$ be a balanced tree and $T_1$ be an unbalanced tree such that
    $T_0 \TamPar T_1$. Then, there exists a node $z$ in $T_1$ such that
    $\Des_{T_1}(z) \geq 2$ and the left subtree and the right subtree of
    $z$ are both balanced.
\end{lemma}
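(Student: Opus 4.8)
The plan is to exploit the fact, already established in the discussion preceding the lemma, that a single rotation alters the imbalance value of exactly two nodes---the root $y$ of the rotated subtree and its left child $x$---while leaving every other imbalance value untouched. Since $T_0$ is balanced, all its nodes other than $x$ and $y$ have imbalance in $\{-1,0,1\}$, and these values persist unchanged in $T_1$; consequently the only possible witnesses for the imbalance of $T_1$ are $x$ and $y$ themselves. Writing the rotated subtree as $(A \ArbCons B) \ArbCons C$ (with root $y$ and left child $x$) as in the rotation definition, the subtrees $A$, $B$, $C$ are balanced subtrees of $T_0$ and are carried unchanged into $T_1$, where they remain balanced.

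First I would record the local shape of $T_1$. After the rotation the affected subtree becomes $A \ArbCons (B \ArbCons C)$, so $x$ has left subtree $A$ and right subtree the tree rooted at $y$, namely $B \ArbCons C$, while $y$ has left subtree $B$ and right subtree $C$. In particular the two subtrees of $y$ are \emph{unconditionally} balanced, whereas the right subtree of $x$ is balanced precisely when $\Des_{T_1}(y) \in \{-1,0,1\}$, since $B$ and $C$ are already balanced.

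Next I would invoke the table of imbalance values. Because the rotation is unbalancing, the pair $(\Des_{T_0}(x), \Des_{T_0}(y))$ falls in one of the cases (U1)--(U7), and in each of these at least one of $\Des_{T_1}(x)$, $\Des_{T_1}(y)$ is at least $2$. I would then split on the value of $\Des_{T_1}(y)$. If $\Des_{T_1}(y) \geq 2$, take $z = y$: its subtrees $B$ and $C$ are balanced, so $z$ witnesses the lemma. Otherwise $\Des_{T_1}(y) \in \{-1,0,1\}$, so the subtree $B \ArbCons C$ rooted at $y$ is itself balanced; inspecting the table shows that the cases with $\Des_{T_1}(y) < 2$---namely (U3), (U5), (U6)---all satisfy $\Des_{T_1}(x) \geq 2$. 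In this situation take $z = x$: its left subtree $A$ is balanced and its right subtree, rooted at $y$, is balanced by the previous sentence, so again $z$ works.

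The crux is the observation that $y$ is a descendant of $x$ in $T_1$, so one must choose the \emph{lower} of the two modified nodes: this guarantees that the subtree hanging below the chosen witness has already regained its balance. Once this is recognized, the only remaining task is the case-by-case reading of the table confirming that whenever $\Des_{T_1}(y)$ fails to reach $2$ the value $\Des_{T_1}(x)$ does, which is immediate from the seven listed pairs; I do not expect any obstacle beyond this bookkeeping.
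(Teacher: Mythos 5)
Your argument is correct and is essentially the paper's own proof: the paper settles the lemma with ``Immediate, looking at (U1)--(U7)'', and your case split is exactly the bookkeeping hidden behind that word --- take $z = y$ when $\Des_{T_1}(y) \geq 2$ (its subtrees $B$ and $C$ are untouched subtrees of the balanced $T_0$), and otherwise take $z = x$, noting that in the remaining cases (U3), (U5), (U6) the table gives $\Des_{T_1}(x) \geq 2$ while the right subtree $B \ArbCons C$ of $x$ is then balanced. Your remark that one must prefer the lower node $y$ whenever it is a valid witness is precisely what makes the ``immediate'' reading of the table legitimate.

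One aside in your proof is false, though harmlessly so. You claim that a rotation ``leaves every other imbalance value untouched'', and conclude that $x$ and $y$ are the \emph{only} candidates for unbalanced nodes of $T_1$. This is guaranteed only when the rotation occurs at the root of the whole tree, which is the setting of the paper's preceding computation. For a rotation performed deeper in the tree, an unbalancing rotation can change the height of the rotated subtree --- in case (U1), for instance, one computes $\Ht(A \ArbCons (B \ArbCons C)) = \Ht((A \ArbCons B) \ArbCons C) + 1$ --- and this height change propagates to the imbalance values of the ancestors of the rotation root, which may themselves become unbalanced in $T_1$. Your proof survives because the claim is never load-bearing: the lemma asks only for \emph{existence} of a witness, the witness you construct lies inside the rotated subtree, the balance of its two subtrees ($A$ and $B \ArbCons C$, or $B$ and $C$) is unaffected by anything happening above, and the step ``$T_1$ unbalanced implies one of (U1)--(U7)'' follows from the equivalence the paper states just before the lemma (both trees balanced if and only if (B1) or (B2)), whose justification uses height preservation only in the two balanced cases, where it does hold.
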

\begin{proof}
    Immediate, looking at (U1), (U2), (U3), (U4), (U5), (U6) and (U7).
\end{proof}

\subsection{Admissible words}

\begin{definition}
    A word $z \in \EnsNat^*$ is \emph{admissible} if either $|z| \leq 1$
    or we have $z_1 - 1 \leq z_2$, and the word obtained by applying the
    substitution
    \begin{equation} \label{SUBSS}
        z_1 . z_2 \longrightarrow
        \begin{cases}
            \max \{z_1, z_2\} + 1 & \text{if $z_1 - 1 \leq z_2 \leq z_1 + 1$,} \\
            z_2 & \text{otherwise}
        \end{cases}
    \end{equation}
    to $z$ is admissible. Let us denote by $\EnsAdmissible$ the set of
    admissible words.
\end{definition}

For example, we can check that the word $z = 00122$ is admissible. Indeed,
applying the substitution (\ref{SUBSS}), we have $00122 \rightarrow 1122
\rightarrow 222 \rightarrow 32 \rightarrow 4$ and at each step, the condition
$z_1 - 1 \leq z_2$ holds. The word $z' = 1234488$ is also admissible: $1234488
\rightarrow 334488 \rightarrow 44488 \rightarrow 5488 \rightarrow 688
\rightarrow 88 \rightarrow 9$. The word $z'' = 3444$ is not admissible
because we have $3444 \rightarrow 544 \rightarrow 64$ and since that
$6 - 1 \nleq 4$, we have $z'' \notin \EnsAdmissible$.

\begin{remark} \label{REM_croiss}
    If $z$ is an admissible word, then, for all $1 \leq i \leq |z| - 1$
    the inequality $z_i - 1 \leq z_{i + 1}$ holds.
\end{remark}

\begin{remark} \label{REM_prefsuff}
    The prefixes and suffixes of an admissible word are still admissible.
\end{remark}

\begin{remark} \label{REM_sub}
    If $z = u.v$ where $z, u, v \in \EnsNat^*$ are admissible words, after
    applying the substitution (\ref{SUBSS}) to $v$ to obtain the word $v'$,
    the word $z' = u.v'$ is still admissible.
\end{remark}

Let the \emph{potential} $\Potentiel(z)$ of an admissible word $z$ be the
outcome of the application of the substitution (\ref{SUBSS}). In the
previous examples, we have $\Potentiel(z) = 4$ and $\Potentiel(z') = 9$.

Let $T$ be a tree, $x$ be a node of $T$, $(x = x_1, x_2, \ldots, x_\ell)$ be
the sequence of all ancestors of $x$ whose right sons are not themselves
ancestors of $x$, ordered from bottom to top and $(S_{x_i})_{1 \leq i \leq \ell}$
be the sequence of the right subtrees of the nodes $x_i$ (see Figure
\ref{FIGArbreDroite}). The word $z$ on the alphabet $\EnsNat$ defined by
$z_i = \Ht(S_{x_i})$ is called the \emph{characteristic word} of the node
$x$ in the tree $T$ and denoted by $\MotCar_T(x)$.

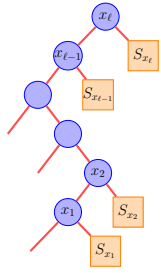
\begin{figure}[ht]
    \centering
    \scalebox{.40}{
        \begin{tikzpicture}
            \tikzstyle{level} = [level distance = 13mm]
            \tikzstyle{level 1} = [sibling distance=25mm]
            \tikzstyle{level 2} = [sibling distance=20mm]
            \tikzstyle{level 3} = [sibling distance=20mm]
            \tikzstyle{level 4} = [sibling distance=20mm]
            \tikzstyle{level 5} = [sibling distance=20mm]
            \tikzstyle{level 6} = [sibling distance=25mm]
            \tikzstyle{Noeud} = [circle, draw = blue!100, fill = blue!30, thick, inner sep = 0pt, minimum size = 9mm]
            \tikzstyle{SArbre} = [rectangle, draw = orange!100, fill = orange!30, thick, inner sep = 0pt, minimum size = 10mm]
            \node[Noeud]{\Large $x_\ell$}
                child{node[Noeud]{\Large $x_{\ell - 1}$}
                    child{node[Noeud]{}
                            child[Noeud]
                            child{node[Noeud]{}
                                child[Noeud]
                                child{node[Noeud]{\Large $x_2$}
                                    child{node[Noeud]{\Large $x_1$}
                                        child[Noeud]
                                        child{node[SArbre]{\Large $S_{x_1}$}}
                                    }
                                    child{node[SArbre]{\Large $S_{x_2}$}}
                                }
                            }
                    }
                    child{node[SArbre]{\Large $S_{x_{\ell - 1}}$}}
                }
                child{node[SArbre]{\Large $S_{x_\ell}$}};
        \end{tikzpicture}
    }
    \caption{The sequence $(S_{x_i})_{1 \leq i \leq \ell}$ associated to the node $x = x_1$.}
    \label{FIGArbreDroite}
\end{figure}

\begin{lemma} \label{LEM11}
    Let $T$ be a balanced tree, $x$ a node of $T$, and $z$ the characteristic
    word of $x$. Then, $z$ is admissible and $\Potentiel(z) \leq \Ht(T)$.
\end{lemma}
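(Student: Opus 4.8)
The plan is to proceed by induction along the sequence $x_1, \dots, x_\ell$ of left-turn ancestors (those ancestors of $x$ whose right son is not itself an ancestor of $x$), from the bottom $x_1 = x$ up to the top $x_\ell$, tracking in parallel the left-to-right reduction that computes the potential. For each $i$ I write $T_i$ for the subtree of $T$ rooted at $x_i$ and set $M_i := \Ht(T_i)$; recall that $z_i = \Ht(S_{x_i})$, where $S_{x_i}$ is the right subtree of $x_i$. Since the substitution (\ref{SUBSS}) always rewrites the first two letters, reducing $z$ amounts to computing $P_1 := z_1$ and $P_{i+1} := f(P_i, z_{i+1})$, where $f(a,b) = \max\{a,b\} + 1$ when $a - 1 \le b \le a + 1$ and $f(a,b) = b$ when $b \ge a + 2$; then $\Potentiel(z) = P_\ell$, and admissibility is exactly the requirement that $P_i - 1 \le z_{i+1}$ hold at each step $1 \le i \le \ell - 1$. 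I would therefore prove by induction the single inequality $P_i \le M_i$, which will yield admissibility and the height bound simultaneously.

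The geometric facts to establish first concern the passage from $x_i$ to $x_{i+1}$. Writing $U_i$ for the subtree rooted at the left child of $x_{i+1}$, the node $x_i$ lies in the left subtree of $x_{i+1}$ because $x_{i+1}$ is a left-turn ancestor of $x$; hence $T_i$ is a subtree of $U_i$, so $\Ht(U_i) \ge M_i$. As $x_{i+1}$ has left subtree $U_i$ and right subtree $S_{x_{i+1}}$, this gives $M_{i+1} = 1 + \max\{\Ht(U_i), z_{i+1}\} \ge 1 + \max\{M_i, z_{i+1}\}$, while the balance of $T$ at $x_{i+1}$ gives $z_{i+1} \ge \Ht(U_i) - 1 \ge M_i - 1$.

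With these relations the induction is immediate. The base case holds since $P_1 = z_1 < 1 + \max\{\Ht(L), z_1\} = M_1$, where $L$ is the left subtree of $x$. Assuming $P_i \le M_i$, the inequality $z_{i+1} \ge M_i - 1$ gives $z_{i+1} \ge P_i - 1$, which is precisely the admissibility condition at this step; and both branches of $f$ give $P_{i+1} \le M_{i+1}$: if $P_i - 1 \le z_{i+1} \le P_i + 1$ then $P_{i+1} = \max\{P_i, z_{i+1}\} + 1 \le 1 + \max\{M_i, z_{i+1}\} \le M_{i+1}$, whereas if $z_{i+1} \ge P_i + 2$ then $P_{i+1} = z_{i+1} < 1 + \max\{M_i, z_{i+1}\} \le M_{i+1}$. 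Thus $z$ is admissible and $\Potentiel(z) = P_\ell \le M_\ell = \Ht(T_\ell) \le \Ht(T)$.

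The main obstacle is the second paragraph: one must correctly absorb the right-turn ancestors that may sit between two consecutive left-turn ancestors $x_i$ and $x_{i+1}$, which is exactly why the argument is routed through the intermediate subtree $U_i \supseteq T_i$ instead of pretending that $x_{i+1}$ is the parent of $x_i$. Once the two relations $\Ht(U_i) \ge M_i$ and $z_{i+1} \ge \Ht(U_i) - 1$ are secured, everything else is the routine bookkeeping of matching the two cases of the substitution against the $\max$-recursion for subtree heights.
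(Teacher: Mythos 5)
Your proof is correct. Every step checks out: the unrolling of the recursive definition of admissibility into the left-to-right recursion $P_1 = z_1$, $P_{i+1} = f(P_i, z_{i+1})$ is faithful to the substitution (\ref{SUBSS}); the two geometric facts $\Ht(U_i) \geq M_i$ and $z_{i+1} \geq \Ht(U_i) - 1$ (balance at $x_{i+1}$) are exactly what is needed; and the invariant $P_i \leq M_i$ closes the induction in both branches of $f$, yielding admissibility and $\Potentiel(z) = P_\ell \leq \Ht(T)$ simultaneously.

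Your route differs in organization from the paper's. The paper proves the lemma by structural induction on balanced trees: it assumes the statement for $L$ and $R$, writes $T = L \ArbCons R$, and distinguishes whether $x$ lies in $L$, in $R$, or is the root; the only nontrivial case is $x \in L$, where $\MotCar_T(x) = \MotCar_L(x).\Ht(R)$ gains one final letter and balance at the root gives $\Ht(R) \geq \Ht(L) - 1 \geq \Potentiel(\MotCar_L(x)) - 1$. You instead fix the node $x$ and induct bottom-up along its left-turn ancestors, tracking the partial potentials explicitly. The two arguments are carried by the same invariant --- your $P_i \leq M_i$ is precisely the lemma statement specialized to the subtree rooted at $x_i$, so your path induction is in effect the trace, along the ancestor chain of $x$, of the paper's structural induction --- but the packaging buys different things. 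Your version makes the reduction mechanics concrete, handles the intermediate right-turn ancestors explicitly through $U_i$, and fills in the case analysis that the paper compresses into ``we have, by induction, the statement of the lemma''; the paper's version is more uniform (all nodes $x$ are treated at once by one induction over the tree), shorter, and matches the recursive style of the definitions of admissibility and potential.
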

\begin{proof}
    By structural induction on balanced trees. The lemma is obviously true
    for the trees of the set $\EnsEq_0 \cup \EnsEq_1$. Let $L$ and $R$ be
    two balanced trees such that $T = L \ArbCons R$ is balanced too and
    assume that the lemma is true for both $L$ and $R$. Let $x$ be a node
    of $T$. Distinguishing the cases where $x$ is a node of $L$, a node
    of $R$, or the root of $T$, we have, by induction, the statement of
    the lemma.
\end{proof}

\begin{lemma} \label{LEM2}
    Let $T$ be a tree and $y$ a node of $T$ such that $\MotCar_T(y)$ is
    admissible and all subtrees of the sequence $(S_{y_i})_{1 \leq i \leq \ell}$
    are balanced. Then, for all node $x$ of $T$ such that $y \ADroite_T x$,
    the word $\MotCar_T(x)$ is admissible.
\end{lemma}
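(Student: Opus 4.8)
The plan is to compare $\MotCar_T(x)$ with $\MotCar_T(y)$ by cutting both characteristic words at the lowest common ancestor $w$ of $x$ and $y$. Since $y \ADroite_T x$, the node $y$ is visited strictly before $x$ in infix order, so either $x$ is an ancestor of $y$ (and then $y$ lies in the left subtree of $x$), or $x$ lies strictly inside the right subtree $S_w$ of $w$ while $y$ lies in the left subtree of $w$ (this includes the subcase $w = y$). In the first case I would note that the ancestors of $x$ on which $\MotCar_T(x)$ is read form exactly the at-or-above-$x$ portion of those on which $\MotCar_T(y)$ is read, with the same recorded right subtrees, so that $\MotCar_T(x)$ is a suffix of $\MotCar_T(y)$; admissibility then follows immediately from Remark \ref{REM_prefsuff}.

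For the main case I would establish the factorizations $\MotCar_T(x) = u \cdot s$ and $\MotCar_T(y) = v \cdot h \cdot s$, where $h = \Ht(S_w)$, the word $s$ collects the contributions of the common proper ancestors of $w$ (those having $w$, hence both $x$ and $y$, in their left subtree, together with the same recorded right subtrees), and $u = \MotCar_{S_w}(x)$ is the characteristic word of $x$ computed inside $S_w$ alone. The key point is that $w$ is an ancestor of $y$ whose right son is not an ancestor of $y$, so $S_w$ is one of the subtrees $S_{y_i}$ and is therefore balanced by hypothesis. Applying Lemma \ref{LEM11} to the balanced tree $S_w$ and the node $x$ then yields that $u$ is admissible and that $\Potentiel(u) \leq \Ht(S_w) = h$. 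On the other side, $\MotCar_T(y) = v \cdot h \cdot s$ is admissible by hypothesis, so its suffix $h \cdot s$ is admissible by Remark \ref{REM_prefsuff}.

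It remains to glue $u$ and $s$ together, for which I would isolate two elementary facts about admissible words. First, lowering the first letter preserves admissibility: if $a \cdot s$ is admissible and $b \leq a$, then $b \cdot s$ is admissible. This is proved by induction on $|s|$, checking that the condition $b - 1 \leq s_1$ still holds and that the letter produced by one substitution step is nondecreasing in its first argument. Applied with $a = h$ and $b = \Potentiel(u)$, it gives that $\Potentiel(u) \cdot s$ is admissible. Second, admissibility is compatible with the potential: if $u$ is admissible and $\Potentiel(u) \cdot s$ is admissible, then $u \cdot s$ is admissible. This follows by induction on $|u|$ directly from the definition, since one substitution step applied to $u \cdot s$ reduces it to $u' \cdot s$, where $u'$ is admissible and $\Potentiel(u') = \Potentiel(u)$. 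Combining the two facts shows that $\MotCar_T(x) = u \cdot s$ is admissible, which finishes the proof.

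I expect the main obstacle to be the bookkeeping of the factorization itself: verifying that the prefix of $\MotCar_T(x)$ read inside $S_w$ really is $\MotCar_{S_w}(x)$, so that Lemma \ref{LEM11} applies to a balanced tree and controls its potential, and that the tail $s$ read along the common proper ancestors of $w$ genuinely coincides for $x$ and for $y$. Once this geometric decomposition is secured, the two word-combinatorial facts are routine inductions, and the inequality $\Potentiel(u) \leq h$ provided by Lemma \ref{LEM11} is precisely what the monotonicity step consumes.
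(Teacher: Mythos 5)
Your proposal is correct and follows essentially the same route as the paper's proof: the same case split ($x$ an ancestor of $y$, handled via Remark \ref{REM_prefsuff}; otherwise $x$ lies in a right subtree $S = S_{y_i}$ hanging off the ancestor path of $y$), the same factorization $\MotCar_T(y) = v.\Ht(S).s$ and $\MotCar_T(x) = \MotCar_S(x).s$, and the same appeal to Lemma \ref{LEM11} to get admissibility of $\MotCar_S(x)$ together with $\Potentiel(\MotCar_S(x)) \leq \Ht(S)$. The only difference is that you spell out the two word-combinatorial facts (lowering the first letter preserves admissibility, and gluing an admissible word to a tail through its potential) that the paper's final sentence uses implicitly, which is a welcome addition of rigor rather than a departure in method.
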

\begin{proof}
    If $x$ is an ancestor of $y$, the word $\MotCar_T(x)$ is a suffix of
    $\MotCar_T(y)$, thus we have, by Remark \ref{REM_prefsuff}, $\MotCar_T(x) \in \EnsAdmissible$.
    Otherwise, let $S$ be the subtree of $T$ such that $x$ is a node of
    $S$ and the parent of $S$ in $T$ is an ancestor of $y$. We have
    $\MotCar_T(y) = u.\Ht(S).v$ where $u, v \in \EnsAdmissible$. As
    $y \ADroite_T S$, we have $S \in \EnsEq$ and by Lemma \ref{LEM11}, we
    have $\MotCar_S(x) \in \EnsAdmissible$ and $\Potentiel(\MotCar_S(x)) \leq \Ht(S)$.
    Thus, thanks to Remark \ref{REM_prefsuff}, $\Ht(S).v \in \EnsAdmissible$,
    so that $\MotCar_T(x) = \MotCar_S(x).v \in \EnsAdmissible$.
\end{proof}

\subsection{The main result}

\begin{theorem} \label{THE1}
    Let $T_0$ and $T_1$ be two balanced trees such that $T_0 \TamOrd T_1$.
    Then, the interval $[T_0, T_1]$ only contains balanced trees. In other
    words, all successors of a tree obtained doing an unbalancing rotation
    into a balanced tree are unbalanced.
\end{theorem}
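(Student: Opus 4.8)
The plan is to prove the reformulation stated in the theorem --- that a right rotation applied anywhere above a tree produced by an unbalancing rotation can never restore balance --- and to deduce the interval statement from it. For the reduction, take any $T$ with $T_0 \TamOrd T \TamOrd T_1$. Since $T_0 \TamOrd T$, fix a chain of single rotations $T_0 = R_0 \TamPar R_1 \TamPar \cdots \TamPar R_m = T$. If $T$ were unbalanced, there would be a least index $j$ with $R_j$ balanced and $R_{j+1}$ unbalanced, so $R_j \TamPar R_{j+1}$ is an unbalancing rotation. Granting the reformulation, every tree $\TamOrd$-above $R_{j+1}$ is unbalanced; but $R_{j+1} \TamOrd T \TamOrd T_1$, so $T_1$ would be unbalanced, contradicting the hypothesis. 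Hence every element of $[T_0,T_1]$ is balanced, and it suffices to prove: if $U$ is obtained from a balanced tree by an unbalancing rotation, then every $V$ with $U \TamOrd V$ is unbalanced.

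The next step is to convert imbalance into a statement about characteristic words. By Lemma \ref{LEMrotationDes}, $U$ contains a node $z$ with $\Des_U(z) \geq 2$ whose left and right subtrees $L$ and $R$ are balanced, so $\Ht(R) \geq \Ht(L) + 2$. The idea is to attach to this configuration a witness of imbalance phrased through $\MotCar$ and $\Potentiel$. Lemma \ref{LEM11} states that in a balanced tree every characteristic word is admissible with $\Potentiel \leq \Ht$; consequently, exhibiting a single node whose word is either non-admissible or has $\Potentiel$ exceeding the height budget certifies that the ambient tree is \emph{not} balanced. I would read the characteristic word along the left-turn ancestors starting from a node chosen inside the subtree of $z$, so that the excessively tall right subtree $R$ forces the accumulated potential above what Lemma \ref{LEM11} permits for a balanced tree.

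The hard part will be showing that this certificate survives every right rotation. Remark \ref{REMrotInf} gives that a rotation preserves the infix order, and since a rotation edits the tree only locally, the characteristic words of nodes far from the rotated edge are unchanged; the work is to control the few words that do change. The engine is Lemma \ref{LEM2}: if $\MotCar_V(y)$ is admissible and the right-hanging subtrees along its spine are balanced, then admissibility propagates to every node $x$ with $y \ADroite_V x$. Read contrapositively, a defect located to the right of $y$ must already be reflected at or to the left of $y$, which is exactly the monotone behaviour needed: a right rotation can push the defect rightward or upward but cannot erase it. I would combine this with Remarks \ref{REM_prefsuff} and \ref{REM_sub}, which make admissibility stable under passing to prefixes and suffixes and under substituting an admissible factor, to argue that the merging computation defining $\Potentiel$ is monotone across the local rewrite of a rotation.

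The main obstacle is precisely this persistence argument. A single right rotation rewrites the right spine near the rotated node and therefore simultaneously edits several characteristic words, so one must verify that the witnessing quantity --- non-admissibility, or $\Potentiel$ above $\Ht$ --- cannot drop back into the admissible, height-bounded regime guaranteed for balanced trees. I expect the cleanest route is to isolate one node whose word carries the defect, track exactly how that word transforms under the substitution rule of the rotation, and use the rightward monotonicity of Lemma \ref{LEM2} to show its $\Potentiel$ never returns below the balanced threshold of Lemma \ref{LEM11}.
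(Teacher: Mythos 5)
Your reduction to the one-step statement (take the first unbalancing rotation along a chain and propagate unbalance upward) is exactly the paper's opening move, and invoking Lemma \ref{LEMrotationDes} to get a node $z$ with $\Des_U(z) \geq 2$ and balanced subtrees is also right. The gap is in your choice of certificate. You propose to witness unbalance by the negation of Lemma \ref{LEM11} --- a node whose characteristic word is non-admissible or has potential exceeding the height --- and then to show this defect persists under rotations. But Lemma \ref{LEM11} is a one-way implication, and its conclusion holds for many unbalanced trees; in particular such a defective node need not exist in the tree $U$ produced by the unbalancing rotation, so there is nothing to persist. Concretely, let $T_0 \in \EnsEq_4$ be the balanced tree whose root $r$ has a single-node left subtree rooted at $a$ and the two-node right comb as right subtree; since $(\Des_{T_0}(a), \Des_{T_0}(r)) = (0,1)$, the rotation at $r$ is unbalancing (case (U4)) and yields the right comb $U$ on four nodes. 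In $U$ every node lies on the right spine, so every characteristic word is a one-letter word ($3$, $2$, $1$, $0$ reading from the root down): all are admissible with potential at most $\Ht(U) = 4$, and the same holds for the words computed inside the subtree of the witness $z$ given by Lemma \ref{LEMrotationDes}. The structural reason is that admissibility is asymmetric: a word fails to be admissible only when $z_1 - 1 > z_2$, i.e., when a right subtree hanging above is too \emph{short}; a right subtree that is too \emph{tall} --- which is exactly what $\Des_U(z) \geq 2$ creates --- falls into the ``otherwise'' branch of substitution (\ref{SUBSS}), absorbs the prefix, and leaves the word admissible with potential bounded by the height. So the quantity you plan to track simply does not record the imbalance.

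This is precisely why the paper's invariant $\PropDes_T(x)$ keeps the unbalance certificate local --- $\Des_T(x) \geq 2$ together with a balanced left subtree --- and uses admissibility of $\MotCar_T(y)$ (where $y$ is the leftmost node of the left subtree of $x$), plus balance of all subtrees to the right of $y$, only as auxiliary side conditions. Their sole role is transport of the local witness: in the one nontrivial case where the rotation root is an ancestor of $y$ whose left child is also an ancestor of $y$, the rotation merges two consecutive right subtrees $A$ and $B$ into $A \ArbCons B$, and admissibility supplies $\Ht(A) - 1 \leq \Ht(B)$ via Remark \ref{REM_croiss}, forcing $A \ArbCons B$ to be either balanced (the old witness $x$ survives) or right-heavy with imbalance at least $2$ at its root (a new witness $x'$). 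To repair your argument you would have to carry a conjunctive invariant of exactly this form --- a local $\Des \geq 2$ witness plus the word-theoretic conditions that make it transportable --- rather than hoping the characteristic word alone certifies unbalance; at that point you are rebuilding the paper's $\PropDes$ and its four-case analysis on the position of the rotation root relative to $y$.
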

\begin{proof}
    To prove the theorem, we shall show that for all balanced tree $T_0$ and an
    unbalanced tree $T_1$ such that $T_0 \TamPar T_1$, all trees $T_2$
    such that $T_1 \TamOrd T_2$ are unbalanced. Indeed, $T_1$ has a property
    guaranteeing it is unbalanced that can be kept for all its successors.

    Let $\PropDes_T(x)$ be the property: the node $x$ of $T$ and the node $y$ which is
    the leftmost node of the left subtree of $x$ satisfy: (see Figure \ref{FIGPropDes}):
    \begin{enumerate}[(1)]
        \item $\Des_T(x) \geq 2$;
        \item the left subtree of $x$ is balanced;
        \item all the subtrees $S$ such that $y \ADroite_T S$ are balanced;
        \item $\MotCar_T(y) \in \EnsAdmissible$.
    \end{enumerate}
    Point (2) guarantees that each tree having the previous property
    is unbalanced.
    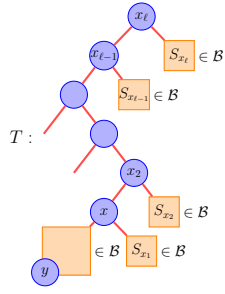
\begin{figure}[ht]
        \centering
            \scalebox{.40}{
                \begin{tikzpicture}
                    \tikzstyle{level} = [level distance = 13mm]
                    \tikzstyle{level 1} = [sibling distance=25mm]
                    \tikzstyle{level 2} = [sibling distance=20mm]
                    \tikzstyle{level 3} = [sibling distance=20mm]
                    \tikzstyle{level 4} = [sibling distance=20mm]
                    \tikzstyle{level 5} = [sibling distance=20mm]
                    \tikzstyle{level 6} = [sibling distance=25mm]
                    \tikzstyle{Noeud} = [circle, draw = blue!100, fill = blue!30, thick, inner sep = 0pt, minimum size = 9mm]
                    \tikzstyle{SArbre} = [rectangle, draw = orange!100, fill = orange!30, thick, inner sep = 0pt, minimum size = 10mm]
                    \tikzstyle{SArbreY} = [rectangle, draw = orange!100, fill = orange!30, thick, inner sep = 0pt, minimum size = 16mm]
                    \node[Noeud]{\Large $x_\ell$}
                        child{node[Noeud]{\Large $x_{\ell - 1}$}
                            child{node[Noeud]{}
                                    child[Noeud]
                                    child{node[Noeud]{}
                                        child[Noeud]
                                        child{node[Noeud]{\Large $x_2$}
                                            child{node[Noeud]{\Large $x$}
                                                child{node[SArbreY, label=right:\Large $\in \EnsEq$] (ssy) {}
                                                    node[Noeud] [below left of = ssy] {\Large $y$}
                                                }
                                                child{node[SArbre, label=right:\Large $\in \EnsEq$]{\Large $S_{x_1}$}}
                                            }
                                            child{node[SArbre, label=right:\Large $\in \EnsEq$]{\Large $S_{x_2}$}}
                                        }
                                    }
                            }
                            child{node[SArbre, label=right:\Large $\in \EnsEq$]{\Large $S_{x_{\ell - 1}}$}}
                        }
                        child{node[SArbre, label=right:\Large $\in \EnsEq$]{\Large $S_{x_\ell}$}};
                    \node at (-4, -4) {\LARGE $T:$};
                \end{tikzpicture}
            }
        \caption{The imbalance property $\PropDes_T(x)$.
                 The node $y$ is the leftmost node of the left subtree of the node $x$.}
        \label{FIGPropDes}
    \end{figure}

    First, let us show that there exists a node $x$ such that $\PropDes_{T_1}(x)$
    is true. The tree $T_1$ is obtained by an unbalancing rotation from
    $T_0$. By Lemma \ref{LEMrotationDes}, there exists a node $x$ in $T_1$
    satisfying points (1) and (2). As the left and right subtrees of $x$
    are balanced and as all the trees on the right compared to $x$ are
    balanced in $T_0$, they remain balanced in $T_1$, so that point (3)
    checks out. To establish (4), denoting by $y$ the leftmost node of the
    left subtree of $x$ in $T_1$, we have, by Remark \ref{REM_sub} and
    Lemmas \ref{LEM11} and \ref{LEM2}, $\MotCar_{T_1}(y) \in \EnsAdmissible$.

    Now, let us show that given a tree $T_1$ such that $\PropDes_{T_1}(x)$
    is satisfied for a node $x$ of $T_1$, for all tree $T_2$ such that
    $T_1 \TamPar T_2$, there exists a node $x'$ of $T_2$ such  that
    $\PropDes_{T_2}(x')$ is satisfied. Let $y$ be the leftmost node of the
    left subtree of $x$ in $T_1$ and $r$ be the root of the rotation that
    transforms $T_1$ into $T_2$. We will treat all cases depending on the
    position of $r$ compared to $y$.

    If the node $r$ belongs to a subtree of $T_1$ which is on the left
    compared to $y$, the rotation does not modify any of the subtrees on
    the right compared to $y$. Thus we have $\PropDes_{T_2}(x)$.

    If the subtree $S_1$ of root $r$ satisfies $y \ADroite_{T_1} S_1$, let $S_2$
    be the subtree of $T_2$ obtained by the rotation of $S_1$ which transforms
    $T_1$ into $T_2$. If $S_2$ is balanced, by Lemma \ref{LEMhauteurEgaleEq},
    $\Ht(S_1) = \Ht(S_2)$ and we have $\PropDes_{T_2}(x)$. If $S_2$ is not
    balanced, by the study of the initial case, we have $\PropDes_{S_2}(x')$
    for a node $x'$ of $S_2$. Besides, by Remark \ref{REM_sub} and Lemma
    \ref{LEM2}, denoting by $y'$ the leftmost node of the left subtree
    of $x'$ in $T_2$, we have $\MotCar_{T_2}(y') \in \EnsAdmissible$ and thus,
    $\PropDes_{T_2}(x')$.

    If the node $r$ is an ancestor of $y$ and the left child of $r$ is
    still an ancestor of $y$, let $B$ be the right subtree of $r$ and $A$
    the right subtree of the left child of $r$ in $T_1$. The rotation
    replaces the trees $A$ and $B$ by the tree $A \ArbCons B$. As
    $\MotCar_{T_1}(y) \in \EnsAdmissible$, we have, by Remark \ref{REM_croiss},
    $\Ht(A) - 1 \leq \Ht(B)$. Thus, if $A \ArbCons B$ is balanced, we have
    $\PropDes_{T_2}(x)$. Indeed, points (1), (2) and (3) are clearly satisfied
    and, by Remark \ref{REM_sub}, we have (4). If $A \ArbCons B$ is unbalanced,
    calling $x'$ the root of this tree in $T_2$, we have $\Des_{T_2}(x') \geq 2$,
    and, calling $y'$ the leftmost node of $A$, we have, by Lemma \ref{LEM2},
    $\MotCar_{T_2}(y') \in \EnsAdmissible$. Thus we have $\PropDes_{T_2}(x')$.

    If the node $r$ is an ancestor of $y$ and the right child of $r$ is
    still an ancestor of $y$, the rotation does not modify any of the
    subtrees on the right compared to $y$. Thus, we have $\PropDes_{T_2}(x)$.
\end{proof}

\section{Tree patterns and synchronous grammars} \label{Sec:Grammars}

Word patterns are usually used to describe languages by considering the
set of words avoiding them. We use the same idea to describe sets of trees.
We show first that we can describe two interesting subsets of the set of
balanced trees only by two-nodes patterns.

Next, we follow the methods of ~\cite{KNU398, BLL94} to characterize, in
our setting, a way to obtain a functional equation admitting as fixed
point the generating series enumerating balanced trees. In this purpose,
we introduce \emph{synchronous grammars}, allowing to generate trees iteratively.
This method gives us a way to enumerate trees avoiding a set of tree patterns
because, as we shall see, functional equations of generating series can
be extracted from synchronous grammars.

\subsection{Tree patterns}

\begin{definition}
    A \emph{tree pattern} is a nonempty non complete rooted planar binary
    tree with labels in $\EnsRel$.
\end{definition}

Let $T$ be a tree and $T_{\Des}$ be the labeled tree of shape $T$ where
each node of $T_{\Des}$ is labeled by its imbalance value. The tree $T$
admits an \emph{occurrence} of a tree pattern $p$ if a connected component
of $T_{\Des}$  has the same shape and same labels as $p$.

Now, given a set $P$ of tree patterns, we can define the set composed of
the trees that do not admit any occurrence of the elements of $P$. For
example, the set
\begin{equation}
    \left \{ \Noeud{.45}{$i$} ~|~ i \notin \{-1, 0, 1\} \right \}
\end{equation}
describes the set of balanced trees; the set
\begin{equation}
    \left \{ \Noeud{.45}{$i$} ~|~ i \ne 0 \right \}
\end{equation}
describes the set of perfect trees and
\begin{equation}
    \left \{ \raisebox{-0.75em}{\MotifA{.45}{$i$}{$j$}} ~|~ i, j \in \EnsRel \right \}
\end{equation}
describes the set of right comb trees.

\subsection{Two particular subsets of balanced trees}

Let us describe a subset of the balanced trees and its counterpart such
that its elements are, roughly speaking, at the end of the balanced trees
subset in the Tamari lattice:

\begin{definition}
    A balanced tree $T_0$ (resp. $T_1$) is \emph{maximal} (resp.
    \emph{minimal}) if for all balanced tree $T_1$ (resp. $T_0$) such
    that $T_0 \TamPar T_1$ we have $T_1$ (resp. $T_0$) unbalanced.
\end{definition}

\begin{proposition} \label{PRO1}
    A balanced tree $T$ is maximal if and only if it avoids the set of
    tree patterns
    \begin{equation}
        P_{\textnormal{max}} := \left \{
            \raisebox{-0.75em}{\MotifA{.45}{$-1$}{$-1$}},~ \raisebox{-0.75em}{\MotifA{.45}{$0$}{$-1$}}
        \right \}.
    \end{equation}
    Similarly, a balanced tree $T$ is minimal if and only if it avoids the set of
    tree patterns
    \begin{equation}
        P_{\textnormal{min}} := \left \{
            \raisebox{-0.75em}{\MotifB{.45}{$1$}{$1$}},~ \raisebox{-0.75em}{\MotifB{.45}{$1$}{$0$}}
        \right \}.
    \end{equation}
\end{proposition}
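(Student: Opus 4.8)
The plan is to prove the characterization of maximal balanced trees; the minimal case follows by a symmetric argument (exchanging left/right, negating imbalance values, and reversing rotations). Recall from the analysis preceding Lemma~\ref{LEMhauteurEgaleEq} that a conservative balancing rotation with root $y$ and left child $x$ is possible precisely when $(\Des_{T_0}(x),\Des_{T_0}(y))\in\{(-1,-1),(0,-1)\}$. By definition, a balanced tree $T$ is maximal exactly when no such rotation produces a balanced tree, i.e.\ when $T$ admits no conservative balancing rotation at all.

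First I would translate the rotation condition into a local pattern condition. A right rotation has its root at a node $y$ whose left child $x$ exists; the rotation is applicable (as a tree operation) whenever $y$ has a nonempty left subtree. The pair $(\Des_T(x),\Des_T(y))$ being $(-1,-1)$ or $(0,-1)$ means: looking at the edge from $x$ up to its parent $y$, the parent $y$ has imbalance $-1$ and $x$ has imbalance $-1$ or $0$. This is exactly an occurrence of one of the two patterns in $P_{\mathrm{max}}$, where \MotifA{.45}{$i$}{$j$} encodes a node of imbalance $i$ that is the left child of a node of imbalance $j$. So the two patterns \raisebox{-0.75em}{\MotifA{.45}{$-1$}{$-1$}} and \raisebox{-0.75em}{\MotifA{.45}{$0$}{$-1$}} correspond precisely to the two admissible pairs $(-1,-1)$ and $(0,-1)$.

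With this dictionary the proof becomes a biconditional verification. For the forward direction, I would argue contrapositively: if $T$ contains an occurrence of one of the two patterns, then there is a node $y$ of imbalance $-1$ with left child $x$ of imbalance in $\{-1,0\}$; performing the right rotation of root $y$ yields a tree $T_1$ with $T\TamPar T_1$, and by the row (B1) or (B2) of the imbalance table, the new imbalance values at $x$ and $y$ lie in $\{-1,0,1\}$ while all other nodes are unchanged, so $T_1$ is balanced. Hence $T$ is not maximal. For the converse, suppose $T$ avoids both patterns; then for every node $y$ admitting a left child $x$, the pair $(\Des_T(x),\Des_T(y))$ is \emph{not} one of $(-1,-1),(0,-1)$, so it falls into one of the seven unbalancing rows (U1)--(U7) of the table. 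Consequently every right rotation applied to $T$ produces a tree that is unbalanced, which by definition makes $T$ maximal.

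I expect the only real subtlety to be bookkeeping about \emph{which} rotations are available and ensuring the pattern occurrences range over all legal rotation roots. A right rotation of root $y$ requires the subtree at $y$ to have the form $(A\ArbCons B)\ArbCons C$, i.e.\ $y$ must have a nonempty left child $x$; conversely every such node $y$ gives a legal rotation, and the imbalance table already enumerates all nine possibilities for $(\Des_T(x),\Des_T(y))$ in a balanced $T$. Thus the correspondence between ``admits a conservative balancing rotation'' and ``contains a $P_{\mathrm{max}}$ pattern'' is an exact, node-by-node equivalence, and no case is missed. The main care is simply verifying that a pattern occurrence (a connected two-node subconfiguration of $T_{\Des}$ consisting of a node and its left child with the prescribed labels) is the same data as a legal rotation root together with its imbalance pair, which is immediate from the definition of \MotifA.
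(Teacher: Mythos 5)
Your proof is correct and takes essentially the same route as the paper: both identify maximality with the impossibility of performing a conservative balancing rotation, use the imbalance table (rows (B1), (B2) versus (U1)--(U7)) to match the admissible pairs $(-1,-1)$ and $(0,-1)$ with occurrences of the patterns of $P_{\textnormal{max}}$, and dispatch the minimal case by left/right symmetry. The paper's own proof is simply a terser statement of this same argument.
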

\begin{proof}
    Assume that $T$ is maximal. For all tree $T_1$ such that $T \TamPar T_1$
    we have $T_1$ unbalanced. Thus, it is impossible to do a conservative
    balancing rotation from $T$ and it avoids the set $P_{\textnormal{max}}$.

    Assume that $T$ avoids the two tree patterns of $P_{\textnormal{max}}$,
    then, for every tree $T_1$ such that $T \TamPar T_1$, the tree $T_1$
    is unbalanced because we can do only unbalancing rotations in $T$.
    Thus, the tree $T$ is maximal.

    The proof of the second part of the proposition is done in an analogous way.
\end{proof}

\subsection{Synchronous grammars and enumeration of balanced trees}

Let us first describe a way to obtain the functional equation admitting
as fixed point the generating series which enumerates balanced trees~\cite{KNU398, BLL94}.

The idea is to generate trees by allowing them to grow from the root to
the leaves step by step. For that, we generate \emph{bud trees}, that are
non complete rooted planar binary trees with the particularity that the
set of external nodes (the nodes without descendant) are \emph{buds}. A
bud tree grows by \emph{simultaneously} substituting all of its buds by
new bud trees. Trees are finally obtained replacing buds by leaves. The
rules of substitution allowing to generate bud trees form a \emph{synchronous
grammar}. The link between tree patterns and synchronous grammars is that
synchronous grammars generate trees controlling the imbalance value of
the nodes. The rules generating balanced trees are
\begin{eqnarray}
    \raisebox{0.65em}{\Bourgeon{.45}{$x$}} & \raisebox{0.75em}{$\longrightarrow$}&
            \BourgeonA{.45}{$x$}{-1}{$y$} ~\raisebox{0.75em}{$+$}~
            \BourgeonA{.45}{$x$}{$0$}{$x$} ~\raisebox{0.75em}{$+$}~
            \BourgeonA{.45}{$y$}{$1$}{$x$} \\
    \Bourgeon{.45}{$y$} & \raisebox{0.10em}{$\longrightarrow$} & \Bourgeon{.45}{$x$}
\end{eqnarray}

The role of the bud \Bourgeon{.45}{$x$} is to generate a node which has $-1$,
$0$ or $1$ as imbalance value, the only values that a balanced tree can
have. The role of the bud \Bourgeon{.45}{$y$} is to delay the growth of
the bud tree to enable the creation of the imbalance values $-1$ and $1$.
We have the following theorem:

\begin{theorem}
    Let $B$ be a bud tree generated from the bud \Bourgeon{.45}{$x$}
    by the previous synchronous grammar. If $B$ does not contain any
    bud \Bourgeon{.45}{$y$}, replacing all buds \Bourgeon{.45}{$x$}
    by leaves, we obtain a tree $T$ where each node $z$ of $T$ is labeled
    by $\Des_T(z)$. In this way, the previous synchronous grammar generates
    exactly the set of balanced trees.
\end{theorem}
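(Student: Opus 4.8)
The plan is to prove the two assertions of the theorem---that every generated bud tree free of buds $y$ is labelled by its true imbalance values, and that the grammar produces exactly $\EnsEq$---by first isolating a single \emph{height-synchronization} lemma that captures the role of the delaying bud $y$. Because the grammar is synchronous, it is natural to index each bud by the number $k$ of growth steps remaining before termination: the initial bud $x$ carries the index $k = \Ht(T)$, every simultaneous expansion decreases by one the index of the freshly created buds, and a bud $x$ of index $0$ becomes a leaf. The production $y \longrightarrow x$ shows that a bud $y$ of index $k$ behaves like a bud $x$ of index $k-1$.

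The key lemma is the following: a bud $x$ of index $k$, grown synchronously to completion (no bud $y$ left) and with its buds $x$ turned into leaves, always yields a subtree of height exactly $k$, \emph{whatever rules are chosen}; a bud $y$ of index $k$ yields height $k-1$. I would prove this by induction on $k$, writing $h_x(k)$ and $h_y(k)$ for these heights. The base case is the leaf, $h_x(0) = 0$. For the step I examine the three productions of $x$: rule $0$ gives $1 + \max(h_x(k-1), h_x(k-1)) = k$; rule $-1$ gives $1 + \max(h_x(k-1), h_y(k-1)) = 1 + \max(k-1, k-2) = k$; rule $1$ is symmetric. Hence all three choices produce height $k$, and $h_y(k) = h_x(k-1) = k-1$ follows from $y \longrightarrow x$. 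For $k = 1$ only rule $0$ applies, since rules $\pm 1$ would create a bud $y$ of index $0$ that cannot terminate without leaving a bud $y$; this matches the fact that no node of imbalance $\pm 1$ can have height $1$. \textbf{This lemma is the crux of the whole statement}: it is exactly here that the bud $y$ does its work, forcing the subtree it governs to be one shorter than a sibling grown from $x$, and it is what renders the height independent of the otherwise free rule choices.

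With the lemma in hand, the forward direction is immediate. Consider a node produced by expanding a bud $x$ of index $k$; its two subtrees grow from the child buds of index $k-1$, so their heights are $(k-1, k-1)$ for rule $0$, $(k-1, k-2)$ for rule $-1$, and $(k-2, k-1)$ for rule $1$. Thus $\Des_T$ of that node equals $0$, $-1$, $+1$ respectively, which is precisely the label the rule places. Therefore every node of $T$ is labelled by its imbalance value $\Des_T$, all labels lie in $\{-1, 0, 1\}$, and $T$ is balanced; so the grammar generates only balanced trees.

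For the converse I construct, given a balanced tree $T$ with $N = \Ht(T)$, a synchronous derivation top-down. Assign to each node $v$ the index equal to the height $h$ of the subtree rooted at $v$, take the root as a bud $x$ of index $N$, and at each node apply the production dictated by $\Des_T(v) \in \{-1, 0, 1\}$, placing a bud $y$ on the strictly shorter side and buds $x$ elsewhere. Balancedness guarantees that a node of imbalance $\pm 1$ has subtrees of heights $h-1$ and $h-2$, and a node of imbalance $0$ has two subtrees of height $h-1$; these are exactly the height relations imposed by the lemma, so the indices of the children agree with the synchronous clock (a child reached through a bud $x$ has subtree height one below its parent, one reached through a bud $y$ has height two below). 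Since all buds of a common index are expanded together, the derivation is genuinely synchronous, contains no bud $y$ once every index has reached $0$, and reproduces $T$. Combining the two directions yields the theorem.
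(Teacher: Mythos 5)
Your proof is correct, but there is nothing in the paper to compare it against: the theorem is stated there without any proof at all, accompanied only by the informal remark that the bud $y$ serves ``to delay the growth of the bud tree'' and by the worked example of Figure 5, the general technique being credited to the cited references. Your argument supplies exactly what that remark leaves implicit. The height-synchronization lemma --- after $k$ further synchronous steps a bud $x$ yields a subtree of height exactly $k$, independently of which rules are chosen, while a bud $y$ yields height $k-1$ --- is the right invariant: it forces every label placed by the grammar to coincide with the true imbalance value $\Des_T$, it explains why a bud $y$ of index $0$ is forbidden (equivalently, why a node of height $1$ must have imbalance $0$), and it makes the converse direction mechanical, since in a balanced tree a node of subtree height $h$ has children of heights $(h-1,h-1)$, $(h-1,h-2)$ or $(h-2,h-1)$ according to $\Des_T \in \{0,-1,1\}$, matching the three productions with the bud $y$ placed on the strictly shorter side. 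Two presentational points are worth fixing. First, the index of the initial bud should be defined as the total number $N$ of synchronous steps of the terminated derivation, with $\Ht(T)=N$ then obtained as a consequence of the lemma rather than built into its statement, which as written is slightly circular. Second, you should state explicitly the fact your induction and your converse both rest on: in a synchronous derivation every bud alive at a given instant has the same index (fresh buds get index one less than their parent, and $y \longrightarrow x$ preserves the clock), so expanding ``all buds of a common index together'' is not an extra assumption but precisely the synchronicity of the grammar.
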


Figure \ref{fig:Generation} shows an example of such a generation.
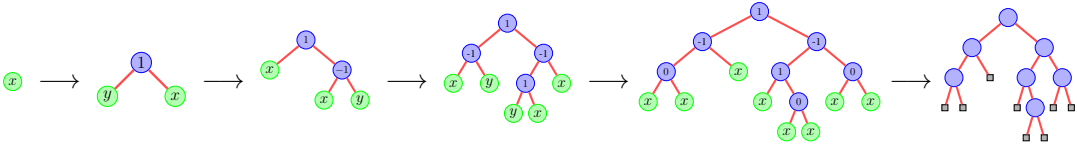
\begin{figure}[ht]
    \centering
        \raisebox{1.9em}{
            \scalebox{.4}{
            \begin{tikzpicture}
                \node[Bourgeon]{\Large $x$};
            \end{tikzpicture}
            }
        }
        \raisebox{2em}{$\longrightarrow$}
        \raisebox{1.3em}{
            \scalebox{.45}{
            \begin{tikzpicture}
                \node[Bourgeon](1)at(0,0){\Large $y$};
                \node[Noeud](2)at(1,1){\Large $1$};
                \node[Bourgeon](3)at(2,0){\Large $x$};
                \draw[Arrete](1)--(2);
                \draw[Arrete](2)--(3);
            \end{tikzpicture}
            }
        }
        \raisebox{2em}{$\longrightarrow$}
        \raisebox{1.2em}{
            \scalebox{.4}{
            \begin{tikzpicture}
                \node[Noeud]{$1$}
                    child{node[Bourgeon]{\Large $x$}
                            child[missing]
                            child[missing]
                    }
                    child{node[Noeud]{$-1$}
                            child{node[Bourgeon]{\Large $x$}
                                    child[missing]
                                    child[missing]
                        }
                            child{node[Bourgeon]{\Large $y$}
                                    child[missing]
                                    child[missing]
                        }
                    };
            \end{tikzpicture}
            }
        }
        \raisebox{2em}{$\longrightarrow$}
        \raisebox{0.7em}{
            \scalebox{.4}{
            \begin{tikzpicture}
                \tikzstyle{level 1} = [sibling distance=24mm]
                \tikzstyle{level 2} = [sibling distance=12mm]
                \tikzstyle{level 3} = [sibling distance=8mm]
                \node[Noeud]{$1$}
                    child{node[Noeud]{-1}
                            child{node[Bourgeon]{\Large $x$}
                                    child[missing]
                                    child[missing]
                        }
                            child{node[Bourgeon]{\Large $y$}
                                    child[missing]
                                    child[missing]
                        }
                    }
                    child{node[Noeud]{-1}
                            child{node[Noeud]{$1$}
                                    child{node[Bourgeon]{\Large $y$}
                                            child[missing]
                                            child[missing]
                            }
                                    child{node[Bourgeon]{\Large $x$}
                                            child[missing]
                                            child[missing]
                            }
                        }
                            child{node[Bourgeon]{\Large $x$}
                                    child[missing]
                                    child[missing]
                        }
                    };
            \end{tikzpicture}
            }
        }
        \raisebox{2em}{$\longrightarrow$}
        \scalebox{.4}{
        \begin{tikzpicture}
            \tikzstyle{level 1} = [sibling distance=38mm]
            \tikzstyle{level 2} = [sibling distance=24mm]
            \tikzstyle{level 3} = [sibling distance=12mm]
            \tikzstyle{level 4} = [sibling distance=8mm]
            \node[Noeud]{$1$}
                child{node[Noeud]{-1}
                        child{node[Noeud]{$0$}
                                child{node[Bourgeon]{\Large $x$}
                                        child[missing]
                                        child[missing]
                        }
                                child{node[Bourgeon]{\Large $x$}
                                        child[missing]
                                        child[missing]
                        }
                    }
                        child{node[Bourgeon]{\Large $x$}
                                child[missing]
                                child[missing]
                    }
                }
                child{node[Noeud]{-1}
                        child{node[Noeud]{$1$}
                                child{node[Bourgeon]{\Large $x$}
                                        child[missing]
                                        child[missing]
                        }
                                child{node[Noeud]{$0$}
                                        child{node[Bourgeon]{\Large $x$}
                                                child[missing]
                                                child[missing]
                            }
                                        child{node[Bourgeon]{\Large $x$}
                                                child[missing]
                                                child[missing]
                            }
                        }
                    }
                        child{node[Noeud]{$0$}
                                child{node[Bourgeon]{\Large $x$}
                                        child[missing]
                                        child[missing]
                        }
                                child{node[Bourgeon]{\Large $x$}
                                        child[missing]
                                        child[missing]
                        }
                    }
                };
        \end{tikzpicture}
        }
        \raisebox{2em}{$\longrightarrow$}
        \scalebox{.4}{
        \begin{tikzpicture}
            \node[Noeud]{}
                child{node[Noeud]{}
                        child{node[Noeud]{}
                                child{node[Feuille]{}}
                                child{node[Feuille]{}}
                    }
                        child{node[Feuille]{}}
                }
                child{node[Noeud]{}
                        child{node[Noeud]{}
                                child{node[Feuille]{}}
                                child{node[Noeud]{}
                                        child{node[Feuille]{}}
                                        child{node[Feuille]{}}
                        }
                    }
                        child{node[Noeud]{}
                                child{node[Feuille]{}}
                                child{node[Feuille]{}}
                    }
                };
        \end{tikzpicture}
        }
    \caption{Generation of a balanced tree.}
    \label{fig:Generation}
\end{figure}

The main purpose of synchronous grammars is to obtain a way to enumerate
the trees generated. We can translate the set of rules to obtain a functional
equation of the generating series enumerating them. For balanced trees, we
have~\cite{KNU398, BLL94, SLO08}:

\begin{theorem} \label{THEequationArbresEq}
    The generating series enumerating balanced trees according
    to the number of leaves of trees is $G_{\textnormal{bal}}(x) := A(x, 0)$ where
    \begin{equation}
        A(x, y) := x + A(x^2 + 2xy, x).
    \end{equation}
\end{theorem}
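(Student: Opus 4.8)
The plan is to translate the synchronous grammar for balanced trees directly into a functional equation by introducing a bivariate generating series that tracks both kinds of buds. I would let $A(x, y)$ be the generating series enumerating bud trees generated from a single \Bourgeon{.45}{$x$} bud, where the variable $x$ marks the buds \Bourgeon{.45}{$x$} and the variable $y$ marks the buds \Bourgeon{.45}{$y$}. The key observation is that each application step of the grammar substitutes \emph{every} bud simultaneously: a bud \Bourgeon{.45}{$x$} is replaced by one of the three productions on the right-hand side of the first rule, and a bud \Bourgeon{.45}{$y$} is replaced by a \Bourgeon{.45}{$x$} according to the second rule. Since the growth is synchronous, one generation of growth corresponds to a single substitution of the marking variables.

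The main step is to read off the substitution that one step of the grammar performs on the marking variables. First I would examine the first rule: a node generated from \Bourgeon{.45}{$x$} contributes a factor accounting for the new node, and its two children are buds drawn from $\{$\Bourgeon{.45}{$x$}, \Bourgeon{.45}{$y$}$\}$ according to the three listed patterns. The three productions give, respectively, the child-pairs $(y, x)$, $(x, x)$, and $(y, x)$ in terms of bud types, so a single \Bourgeon{.45}{$x$} after one step becomes a node whose subtree content is encoded by $x^2 + 2xy$ (one term $x\cdot x$ from the middle production and two terms $x \cdot y$ from the outer two, where $x$ and $y$ are the formal variables counting the respective buds of the children). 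Simultaneously the second rule turns each \Bourgeon{.45}{$y$} into a \Bourgeon{.45}{$x$}, which amounts to the substitution $y \mapsto x$. Composing these, one step of synchronous growth replaces the pair of variables $(x, y)$ by $(x^2 + 2xy,\, x)$, which yields the self-referential relation
\begin{equation}
    A(x, y) = x + A(x^2 + 2xy,\, x),
\end{equation}
where the additive $x$ accounts for the base case in which the \Bourgeon{.45}{$x$} bud is directly replaced by a leaf rather than expanded. Finally, since a genuine balanced tree is obtained only once no \Bourgeon{.45}{$y$} buds remain (all buds become leaves), the enumeration of balanced trees is recovered by setting $y = 0$, giving $G_{\textnormal{bal}}(x) = A(x, 0)$.

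The hard part will be justifying rigorously that the synchronous, level-by-level substitution is faithfully captured by the single variable substitution $(x,y) \mapsto (x^2 + 2xy, x)$, and in particular that no balanced tree is counted twice and none is omitted. This rests on the previously stated theorem that the grammar generates exactly the balanced trees, each labeled uniquely by its imbalance values; I would argue that the correspondence between a derivation in the grammar and the resulting labeled tree is a bijection, so that tracking the bud markings through the composed substitution enumerates each balanced tree exactly once by its number of leaves. The remaining subtlety is the role of the delaying bud \Bourgeon{.45}{$y$}: because it merely postpones growth by one generation before becoming an \Bourgeon{.45}{$x$}, its marking must be carried as a genuine second variable during intermediate steps and only specialized to $0$ at the end, which is precisely what the evaluation $A(x,0)$ accomplishes.
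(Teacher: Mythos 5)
Your proposal is correct and takes essentially the same route the paper intends: the paper states this theorem without a written proof (deferring to cited references), but its surrounding text and its proofs of the later propositions make clear the argument is exactly yours --- mark the two bud types by $x$ and $y$, encode one synchronous step as the substitution $(x, y) \mapsto (x^2 + 2xy,\, x)$ applied inside the series, add $x$ for the trivial derivation, and set $y = 0$ to discard bud trees still containing a delayed bud. The only slip is cosmetic: the child-pairs of the three productions are $(x,y)$, $(x,x)$, $(y,x)$ rather than $(y,x)$, $(x,x)$, $(y,x)$, which does not affect the contribution $x^2 + 2xy$.
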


The resolution, or, in other words, the coefficient extraction for this
kind of functional equation, is made by iteration. We proceed by computing
the sequence of polynomials $(A_i)_{i \geq 0}$ defined by:
\begin{equation}
    A_i(x, y) =
    \begin{cases}
        x                         & \mbox{if $i = 0$,} \\
        x + A_{i-1}(x^2 + 2xy, x) & \mbox{otherwise.}
    \end{cases}
\end{equation}
The first iterations give
\begin{eqnarray}
    A_0 & = & x, \\
    A_1 & = & x + 2xy + x^2, \\
    A_2 & = & x + 2xy + x^2 + 4x^2y + 2x^3 + 4x^2y^2 + 4x^3y + x^4.
\end{eqnarray}

The fixed point of the sequence $(A_i)_{i \geq 0}$, after substituting
$0$ to the parameter $y$ in order to ignore bud trees with some buds
\Bourgeon{.45}{$y$}, is the generating series of balanced
trees counted according to the number of leaves.

We can refine this idea to enumerate maximal balanced trees:

\begin{proposition} \label{PROserieGenArbresEqMax}
    The generating series enumerating maximal balanced trees
    according to the number of leaves of the trees is $G_{\textnormal{max}}(x) := A(x, 0, 0)$
    where
    \begin{equation}
        A(x, y, z) := x + A(x^2 + xy + yz, x, xy).
    \end{equation}
\end{proposition}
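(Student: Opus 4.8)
The plan is to derive this functional equation the same way the one for all balanced trees (Theorem \ref{THEequationArbresEq}) was derived: by building a synchronous grammar that generates exactly the maximal balanced trees, and then translating the grammar's substitution rules into a functional equation on the generating series. The starting point is Proposition \ref{PRO1}, which tells me that maximal balanced trees are precisely the balanced trees avoiding the two patterns $\MotifA{.45}{$-1$}{$-1$}$ and $\MotifA{.45}{$0$}{$-1$}$. Both forbidden patterns are of the form: a left child with imbalance value in $\{-1,0\}$ whose parent has imbalance value $-1$. So the grammar I already have for balanced trees must be refined to forbid exactly the configuration in which a node with imbalance $-1$ has a left child whose imbalance value is $-1$ or $0$.

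**Refining the grammar.**

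First I would revisit the balanced-tree grammar. Its bud \Bourgeon{.45}{$x$} produces, via its three productions, a node of imbalance $-1$, $0$, or $1$, and the auxiliary bud \Bourgeon{.45}{$y$} delays growth so that the height gaps forcing imbalance $\pm 1$ can actually be realized. To exclude the two patterns, I must distinguish the buds according to whether the node they produce will sit as the \emph{left child} of a node of imbalance $-1$. The natural move is to introduce a third bud type --- which in the generating series corresponds to the new variable $z$ --- that is installed exactly at the left-child position below a node that has just been created with imbalance $-1$. This new bud must be forbidden from producing a node of imbalance $-1$ or $0$; it may only produce a node of imbalance $1$ (forcing a nonempty right subtree below it), thereby avoiding both forbidden patterns. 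Everywhere else --- at the root, at right children, and at left children below nodes of imbalance $0$ or $1$ --- the ordinary bud \Bourgeon{.45}{$x$} is used. I would then verify that this refined grammar generates precisely the balanced trees avoiding $P_{\textnormal{max}}$, i.e. exactly the maximal balanced trees, using the same correctness argument as in the balanced-tree theorem combined with Proposition \ref{PRO1}.

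**Translating the grammar into the functional equation.**

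Once the grammar is fixed, the translation into the series $A(x,y,z)$ is mechanical and mirrors the passage from the balanced grammar to $A(x,y) = x + A(x^2 + 2xy, x)$. Here $x$ marks the ordinary bud, $y$ marks the delaying bud, and $z$ marks the new constrained left-child bud. The single ordinary production expands into $x$ (the leaf/terminal contribution) plus a composition $A(\,\cdot\,,\,\cdot\,,\,\cdot\,)$ whose three arguments record which bud type replaces $x$, $y$, and $z$ after one synchronous step. Reading off the substitutions dictated by the grammar yields the first argument $x^2 + xy + yz$, the second argument $x$, and the third argument $xy$: the $x^2$, $xy$, and $yz$ terms encode the three ways an $x$-bud expands (each contributing the appropriate child bud types, with the constrained child appearing in the imbalance-$(-1)$ production as the $yz$ term), while the second and third arguments record how the delaying and constrained buds propagate. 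The maximal-tree series is then obtained, exactly as before, by setting the auxiliary variables to zero: $G_{\textnormal{max}}(x) = A(x,0,0)$, which discards all bud trees still containing a delaying or constrained bud.

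**The main obstacle.**

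The routine part is the series bookkeeping; the delicate part is \textbf{getting the grammar right so that it generates the maximal balanced trees and no others}. In particular I must check two things carefully: that the constrained bud is installed in \emph{exactly} the left-child positions governed by $P_{\textnormal{max}}$ (and only when the parent's imbalance is $-1$), and that restricting that bud to produce only imbalance $1$ does not inadvertently exclude any legitimate maximal tree nor admit any forbidden occurrence deeper in the tree. I expect this case analysis --- confirming that avoidance of the two local patterns is equivalent to the local restriction imposed on the new bud at every node --- to be where the real content lies, after which the functional equation $A(x,y,z) = x + A(x^2 + xy + yz,\, x,\, xy)$ follows by direct translation.
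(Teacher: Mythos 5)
Your proposal is correct and follows essentially the same route as the paper: the paper also invokes Proposition \ref{PRO1} and refines the balanced-tree grammar with a third bud type (the variable $z$), placed exactly at the left-child position of each newly created imbalance-$(-1)$ node and allowed only the single production giving a node of imbalance $1$, then reads off $A(x,y,z) = x + A(x^2+xy+yz,\,x,\,xy)$ by the same mechanical translation you describe. The grammar you outline and the resulting functional equation coincide with the paper's.
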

\begin{proof}
    To obtain this functional equation, let us use the following synchronous
    grammar which generates maximal balanced trees:
    \begin{eqnarray}
        \raisebox{0.65em}{\Bourgeon{.45}{$x$}} & \raisebox{0.75em}{$\longrightarrow$} &
                                           \BourgeonA{.45}{$x$}{$0$}{$x$} ~\raisebox{0.75em}{$+$}~
                                           \BourgeonA{.45}{$y$}{$1$}{$x$} ~\raisebox{0.75em}{$+$}~
                                           \BourgeonA{.45}{$z$}{-1}{$y$} \\
        \Bourgeon{.45}{$y$} & \raisebox{0.10em}{$\longrightarrow$} & \Bourgeon{.45}{$x$} \\
        \raisebox{0.65em}{\Bourgeon{.45}{$z$}} & \raisebox{0.75em}{$\longrightarrow$} & \BourgeonA{.45}{$y$}{$1$}{$x$}
    \end{eqnarray}

    This grammar must generate only maximal balanced trees. By Proposition
    \ref{PRO1}, the generated trees must avoid the two tree patterns of
    $P_{\textnormal{max}}$. To do that, we have to control the growth of
    the bud \Bourgeon{.45}{$x$} when it generates a tree $S$
    such that its root has an imbalance value of $-1$. Indeed, if the root
    of the left subtree of $S$ grows with an imbalance value of $-1$ or $0$,
    one of the two tree patterns is not avoided. The idea is to force the
    imbalance value of the root of left subtree of $S$ to be $1$, role
    played by the bud \Bourgeon{.45}{$z$}.
\end{proof}

The solution of this functional equation give us the following first values
for the number of maximal trees in the Tamari lattice:
$1$, $1$, $1$, $1$, $2$, $2$, $2$, $4$, $6$, $9$, $11$, $13$, $22$,
$38$, $60$, $89$, $128$, $183$, $256$, $353$, $512$, $805$, $1336$, $2221$,
$3594$, $5665$, $8774$, $13433$, $20359$.

\section{The shape of the balanced tree intervals} \label{Sec:Shape}

\subsection{Isomorphism between balanced tree intervals and hypercubes}

A hypercube of dimension $k$ can be seen as a poset whose elements are
subsets of a set $\{e_1, \ldots, e_k \}$ ordered by the relation of inclusion.
Let us denote by $\HyperCube_k$ the hypercube poset of dimension $k$.

We have the following characterization of the shape of balanced tree intervals:

\begin{theorem} \label{THEintervalleHypercube}
    Let $T_0$ and $T_1$ be two balanced trees such that $T_0 \TamOrd T_1$.
    Then there exists $k \geq 0$ such that the posets $([T_0, T_1], \TamOrd)$
    and $\HyperCube_k$ are isomorphic.
\end{theorem}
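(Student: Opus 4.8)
The plan is to show that the interval $[T_0, T_1]$, with its Tamari order, carries the structure of a Boolean lattice on a set of "independent" conservative balancing rotations. The starting observation is that, by Theorem \ref{THE1}, every tree in $[T_0, T_1]$ is balanced, and by Lemma \ref{LEMhauteurEgaleEq} every covering step inside the interval is a conservative balancing rotation, hence preserves the global height. So the entire interval lives inside one "height level" of balanced trees, and moving around in it amounts to toggling local rotations whose imbalance signatures at the rotated pair $(x,y)$ are exactly the patterns (B1) $(-1,-1)$ and (B2) $(0,-1)$ of the table.

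First I would identify, for a fixed balanced tree $T$ in the interval, the set of nodes at which a conservative balancing rotation is available, i.e. the nodes $y$ whose pair $(\Des_T(x), \Des_T(y))$ with $x$ the left child equals $(-1,-1)$ or $(0,-1)$; by Proposition \ref{PRO1} these are exactly the occurrences of the forbidden patterns of $P_{\textnormal{min}}$. The key structural claim is that such rotation sites act \emph{independently}: performing a conservative balancing rotation at one site neither creates nor destroys the availability of a conservative rotation at a different site, because each such rotation modifies the imbalance values only at the two nodes $x, y$ it involves (as noted before Lemma \ref{LEMhauteurEgaleEq}, the subtrees $A, B, C$ keep their imbalances), and one checks that the affected nodes are not themselves rotation sites for the remaining toggles. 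This commutation is what lets me define, for each $T \in [T_0, T_1]$, a subset of the fixed "toggle set" recording which of the $k$ available rotations (counted relative to $T_0$) have already been performed to reach $T$ from $T_0$.

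Concretely I would let $k$ be the number of conservative balancing rotation sites in $T_0$ that lie "below" $T_1$, set up the map $\phi \colon [T_0, T_1] \to \HyperCube_k$ sending $T$ to the set of sites toggled along any saturated chain from $T_0$ to $T$, and verify: (i) $\phi$ is well defined (independence of the chain, via the commutation/diamond property just described, so the toggled set depends only on $T$); (ii) $\phi$ is a bijection onto all subsets (every subset of commuting toggles is realizable because independence lets them be applied in any order, and distinct subsets give distinct trees since a single toggle changes the tree); and (iii) $\phi$ is order-preserving in both directions, since $T \TamPar T'$ within the interval corresponds to adding exactly one element to the toggled set, matching the covering relation of $\HyperCube_k$. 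Establishing the diamond/exchange property for these local rotations — that two distinct available conservative rotations can be applied in either order with the same result, and that the set of available rotations behaves additively — is the main obstacle, and I expect it to rest on a careful but finite case analysis of how the pattern pairs (B1), (B2) interact when their rotation sites are in various relative positions (disjoint subtrees, or one an ancestor of the other along a right-leaning chain), using the infix-order invariance of Remark \ref{REMrotInf} to control which nodes each rotation can touch.
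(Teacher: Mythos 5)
Your overall architecture coincides with the paper's: encode each tree of $[T_0,T_1]$ by the subset of conservative balancing rotations performed since $T_0$, and check this is an order isomorphism onto $\HyperCube_k$. However, your key structural claim is false as stated, and the method you propose to prove it would fail. It is not true that performing a conservative balancing rotation at one available site ``neither creates nor destroys the availability of a conservative rotation at a different site.'' Consider a balanced tree containing a node $p$ with $\Des(p)=-1$ whose left child $y$ has $\Des(y)=-1$ and whose left child $x$ in turn has $\Des(x)\in\{-1,0\}$; the smallest instance is a four-node balanced tree. Both $y$ and $p$ are then roots of available conservative balancing rotations, but rotating at $y$ makes $x$ (whose imbalance becomes $1$) the new left child of $p$, destroying the site at $p$, while rotating at $p$ gives $y$ imbalance $1$, destroying the site at $y$ --- and, along conservative rotations, these imbalance values can never decrease again. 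This is exactly why the Hasse diagram of $(\EnsEq_4,\TamOrd)$ in Figure \ref{FIGdiagInterEq} is a zigzag rather than a square: the two covers of its bottom element admit no common balanced upper bound. So the diamond/exchange property fails for arbitrary pairs of available rotations, and the ``careful but finite case analysis'' you defer to would, in the case where one rotation root is the left child of the other, refute your claim rather than establish it.

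What rescues the theorem --- and what the paper actually argues --- is that such a nested pair can never occur among the rotations of a single interval: the independence claim must be restricted to rotations leading to trees $\TamOrd T_1$, and ruling out the nested configuration is a global matter involving $T_1$, not a local one. The missing ingredient is a monotonicity/locking argument: along any chain of conservative balancing rotations, every node's imbalance value is non-decreasing (each rotation raises only its root, from $-1$ to $0$ or $1$, and the root's left child, from $-1$ or $0$ to $1$, and by Lemma \ref{LEMhauteurEgaleEq} alters nothing else), and once a node has played either role it can never again be the root of a conservative rotation. Played against the fixed upper bound $T_1$ (and the finiteness of the tree), this shows that the two mutually exclusive rotations of a nested pair cannot both lie weakly below the same balanced tree, that every rotation used inside $[T_0,T_1]$ is a site already present in $T_0$, and that all maximal chains from $T_0$ to $T_1$ use the same set of rotation roots (well-defined by infix position, via Remark \ref{REMrotInf}); only after this are the surviving toggles pairwise non-interfering, so that your local commutation argument applies. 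Your proposal lacks precisely this step. A smaller slip: the available conservative rotation sites of a balanced tree are the occurrences of the patterns of $P_{\textnormal{max}}$ (Proposition \ref{PRO1}), not of $P_{\textnormal{min}}$; the latter mark where a rotation can be undone.
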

\begin{proof}
    First, note by Theorem \ref{THE1}, that $I = [T_0, T_1] \subseteq \EnsEq$.
    Thus, every covering relation of the interval $I$ is a conservative
    balancing rotation.

    Then, note that the rotations needed to transform $T_0$ into $T_1$
    are disjoint in the sense that if $y$ is a node of $T_2 \in I$ and
    $x$ its left child, if we apply a conservative balancing rotation
    of root $y$ in $T_2$ to obtain $T_3 \in I$, all the rotations in the
    successors of $T_3$ of root $y$ and of root $x$ are unbalancing
    rotations. Indeed, by Lemma \ref{LEMhauteurEgaleEq}, each conservative
    balancing rotation modifies only the imbalance values of the root of
    the rotation and its left child, and, according to the values obtained,
    these two nodes cannot thereafter be roots of conservative balancing
    rotations.

    Besides, by the nature of the conservative balancing rotations and by
    Theorem \ref{THE1}, we can see that all the ways to transform $T_0$
    into $T_1$ solicit the same rotations, possibly in a different order.

    Now, we can associate to a tree $T \in I$ a subset of $\EnsNat$ containing
    the positions in the infix order of the nodes $y$ such that, to obtain
    $T$ from $T_0$, we have done, among other, a rotation of root $y$.
    The interval $I$ is isomorphic to the poset $\HyperCube_k$ where $k$
    is the number of rotations needed to transform $T_0$ into $T_1$.
\end{proof}

\begin{figure}[ht]
    \centering
    \subfigure[$(\EnsEq_0, \TamOrd)$]{\makebox[2cm]{\scalebox{.23}{
        \begin{tikzpicture}
            \node[Cercle] at (0, 0)  (000) {};
        \end{tikzpicture}
    }}}
    \subfigure[$(\EnsEq_1, \TamOrd)$]{\makebox[2cm]{\scalebox{.23}{
        \begin{tikzpicture}
            \node[Cercle] at (0, 0)  (000) {};
        \end{tikzpicture}
    }}}
    \subfigure[$(\EnsEq_2, \TamOrd)$]{\makebox[2cm]{\scalebox{.23}{
        \begin{tikzpicture}
            \node[Cercle] at (0, 0)  (000) {};
            \node[Cercle] at (0, 1)  (010) {};
            \draw[ArreteCube] (000) -- (010);
        \end{tikzpicture}
    }}}
    \subfigure[$(\EnsEq_3, \TamOrd)$]{\makebox[2cm]{\scalebox{.23}{
        \begin{tikzpicture}
            \node[Cercle] at (0, 0)  (000) {};
        \end{tikzpicture}
    }}}
    \subfigure[$(\EnsEq_4, \TamOrd)$]{\makebox[2cm]{\scalebox{.23}{
        \begin{tikzpicture}
            \node[Cercle] at (0, 0) (1) {};
            \node[Cercle] at (1, 0) (2) {};
            \node[Cercle] at (0, 1) (3) {};
            \node[Cercle] at (1, 1) (4) {};
            \draw[ArreteCube] (1) -- (3);
            \draw[ArreteCube] (1) -- (4);
            \draw[ArreteCube] (2) -- (4);
        \end{tikzpicture}
    }}}
    \subfigure[$(\EnsEq_5, \TamOrd)$]{\makebox[2cm]{\scalebox{.23}{
        \begin{tikzpicture}
            \node[Cercle] at (0, 0)  (1) {};
            \node[Cercle] at (-1, 1) (2) {};
            \node[Cercle] at (1, 1)  (3) {};
            \node[Cercle] at (0, 2)  (4) {};
            \draw[ArreteCube] (1) -- (2);
            \draw[ArreteCube] (1) -- (3);
            \draw[ArreteCube] (2) -- (4);
            \draw[ArreteCube] (3) -- (4);
            \node[Cercle] at (2, 0)  (000) {};
            \node[Cercle] at (2, 1)  (010) {};
            \draw[ArreteCube] (000) -- (010);
        \end{tikzpicture}
    }}}
    \subfigure[$(\EnsEq_6, \TamOrd)$]{\makebox[2cm]{\scalebox{.23}{
        \begin{tikzpicture}
            \node[Cercle] at (0, 0)  (000) {};
            \node[Cercle] at (0, 1)  (010) {};
            \draw[ArreteCube] (000) -- (010);
            \node[Cercle] at (1, 0)  (1) {};
            \node[Cercle] at (1, 1)  (2) {};
            \draw[ArreteCube] (1) -- (2);
        \end{tikzpicture}
    }}}
    \subfigure[$(\EnsEq_7, \TamOrd)$]{\makebox[4cm]{\scalebox{.23}{
        \begin{tikzpicture}
            \node[Cercle] at (0, 0)  (000) {};
            \node[Cercle] at (-1, 1) (100) {};
            \node[Cercle] at (0, 1)  (010) {};
            \node[Cercle] at (1, 1)  (001) {};
            \node[Cercle] at (-1, 2) (110) {};
            \node[Cercle] at (1, 2)  (011) {};
            \node[Cercle] at (0, 2)  (101) {};
            \node[Cercle] at (0, 3)  (111) {};
            \draw[ArreteCube] (000) -- (100);
            \draw[ArreteCube] (000) -- (010);
            \draw[ArreteCube] (000) -- (001);
            \draw[ArreteCube] (100) -- (110);
            \draw[ArreteCube] (010) -- (110);
            \draw[ArreteCube] (010) -- (011);
            \draw[ArreteCube] (001) -- (011);
            \draw[ArreteCube] (100) -- (101);
            \draw[ArreteCube] (001) -- (101);
            \draw[ArreteCube] (101) -- (111);
            \draw[ArreteCube] (110) -- (111);
            \draw[ArreteCube] (011) -- (111);
            \node[Cercle] at (-2.5, 1.5)  (1) {};
            \node[Cercle] at (-3.5, 2.5) (2) {};
            \node[Cercle] at (-1.5, 2.5)  (3) {};
            \node[Cercle] at (-2.5, 3.5)  (4) {};
            \draw[ArreteCube] (1) -- (2);
            \draw[ArreteCube] (1) -- (3);
            \draw[ArreteCube] (2) -- (4);
            \draw[ArreteCube] (3) -- (4);
            \draw[ArreteCube] (1) -- (110);
            \draw[ArreteCube] (3) -- (111);
            \node[Cercle] at (2.5, -0.5)  (1a) {};
            \node[Cercle] at (1.5, 0.5) (2a) {};
            \node[Cercle] at (3.5, 0.5)  (3a) {};
            \node[Cercle] at (2.5, 1.5)  (4a) {};
            \draw[ArreteCube] (1a) -- (2a);
            \draw[ArreteCube] (1a) -- (3a);
            \draw[ArreteCube] (2a) -- (4a);
            \draw[ArreteCube] (3a) -- (4a);
            \draw[ArreteCube] (000) -- (2a);
            \draw[ArreteCube] (001) -- (4a);
            \node[Cercle] at (-2.5, 0)  (b) {};
        \end{tikzpicture}
    }}}
    \subfigure[$(\EnsEq_8, \TamOrd)$]{\makebox[4cm]{\scalebox{.23}{
        \begin{tikzpicture}
            \node[Cercle] at (4, 0)  (000) {};
            \node[Cercle] at (3, 1) (100) {};
            \node[Cercle] at (4, 1)  (010) {};
            \node[Cercle] at (5, 1)  (001) {};
            \node[Cercle] at (3, 2) (110) {};
            \node[Cercle] at (5, 2)  (011) {};
            \node[Cercle] at (4, 2)  (101) {};
            \node[Cercle] at (4, 3)  (111) {};
            \draw[ArreteCube] (000) -- (100);
            \draw[ArreteCube] (000) -- (010);
            \draw[ArreteCube] (000) -- (001);
            \draw[ArreteCube] (100) -- (110);
            \draw[ArreteCube] (010) -- (110);
            \draw[ArreteCube] (010) -- (011);
            \draw[ArreteCube] (001) -- (011);
            \draw[ArreteCube] (100) -- (101);
            \draw[ArreteCube] (001) -- (101);
            \draw[ArreteCube] (101) -- (111);
            \draw[ArreteCube] (110) -- (111);
            \draw[ArreteCube] (011) -- (111);
            \node[Cercle] at (7, 1)  (000b) {};
            \node[Cercle] at (6, 2) (100b) {};
            \node[Cercle] at (7, 2)  (010b) {};
            \node[Cercle] at (8, 2)  (001b) {};
            \node[Cercle] at (6, 3) (110b) {};
            \node[Cercle] at (8, 3)  (011b) {};
            \node[Cercle] at (7, 3)  (101b) {};
            \node[Cercle] at (7, 4)  (111b) {};
            \draw[ArreteCube] (000b) -- (100b);
            \draw[ArreteCube] (000b) -- (010b);
            \draw[ArreteCube] (000b) -- (001b);
            \draw[ArreteCube] (100b) -- (110b);
            \draw[ArreteCube] (010b) -- (110b);
            \draw[ArreteCube] (010b) -- (011b);
            \draw[ArreteCube] (001b) -- (011b);
            \draw[ArreteCube] (100b) -- (101b);
            \draw[ArreteCube] (001b) -- (101b);
            \draw[ArreteCube] (101b) -- (111b);
            \draw[ArreteCube] (110b) -- (111b);
            \draw[ArreteCube] (011b) -- (111b);
            \draw[ArreteCube] (100) -- (100b);
            \draw[ArreteCube] (010) -- (010b);
            \draw[ArreteCube] (001) -- (001b);
            \draw[ArreteCube] (110) -- (110b);
            \draw[ArreteCube] (011) -- (011b);
            \draw[ArreteCube] (101) -- (101b);
            \draw[ArreteCube] (111) -- (111b);
            \draw[ArreteCube] (000) -- (000b);
            \node[Cercle] at (0, 0)  (0) {};
            \node[Cercle] at (1, 1)  (1) {};
            \node[Cercle] at (2, 0)  (2) {};
            \node[Cercle] at (0, 2)  (3) {};
            \node[Cercle] at (2, 2)  (4) {};
            \node[Cercle] at (1, 3)  (5) {};
            \node[Cercle] at (-1, 1)  (6) {};
            \node[Cercle] at (-2, 0)  (7) {};
            \draw[ArreteCube] (0) -- (1);
            \draw[ArreteCube] (2) -- (1);
            \draw[ArreteCube] (0) -- (3);
            \draw[ArreteCube] (2) -- (4);
            \draw[ArreteCube] (1) -- (5);
            \draw[ArreteCube] (3) -- (5);
            \draw[ArreteCube] (4) -- (5);
            \draw[ArreteCube] (0) -- (6);
            \draw[ArreteCube] (7) -- (6);
            \node[Cercle] at (11, 3)  (0a) {};
            \node[Cercle] at (10, 2) (1a) {};
            \node[Cercle] at (9, 3) (2a) {};
            \node[Cercle] at (11, 1)  (3a) {};
            \node[Cercle] at (9, 1) (4a) {};
            \node[Cercle] at (10, 0) (5a) {};
            \node[Cercle] at (12, 2)  (6a) {};
            \node[Cercle] at (13, 3)  (7a) {};
            \draw[ArreteCube] (0a) -- (1a);
            \draw[ArreteCube] (2a) -- (1a);
            \draw[ArreteCube] (0a) -- (3a);
            \draw[ArreteCube] (2a) -- (4a);
            \draw[ArreteCube] (1a) -- (5a);
            \draw[ArreteCube] (3a) -- (5a);
            \draw[ArreteCube] (4a) -- (5a);
            \draw[ArreteCube] (0a) -- (6a);
            \draw[ArreteCube] (7a) -- (6a);
        \end{tikzpicture}
    }}}
    \subfigure[$(\EnsEq_9, \TamOrd)$]{\makebox[4cm]{\scalebox{.23}{
        \begin{tikzpicture}
            \node[Cercle] at (0, 0)  (1) {};
            \node[Cercle] at (-1, 1) (2) {};
            \node[Cercle] at (1, 1)  (3) {};
            \node[Cercle] at (0, 2)  (4) {};
            \draw[ArreteCube] (1) -- (2);
            \draw[ArreteCube] (1) -- (3);
            \draw[ArreteCube] (2) -- (4);
            \draw[ArreteCube] (3) -- (4);
            \node[Cercle] at (-3, 0) (1a) {};
            \node[Cercle] at (-4, 1) (2a) {};
            \node[Cercle] at (-2, 1) (3a) {};
            \node[Cercle] at (-3, 2) (4a) {};
            \draw[ArreteCube] (1a) -- (2a);
            \draw[ArreteCube] (1a) -- (3a);
            \draw[ArreteCube] (2a) -- (4a);
            \draw[ArreteCube] (3a) -- (4a);
            \node[Cercle] at (3, 0) (1b) {};
            \node[Cercle] at (2, 1) (2b) {};
            \node[Cercle] at (4, 1) (3b) {};
            \node[Cercle] at (3, 2) (4b) {};
            \draw[ArreteCube] (1b) -- (2b);
            \draw[ArreteCube] (1b) -- (3b);
            \draw[ArreteCube] (2b) -- (4b);
            \draw[ArreteCube] (3b) -- (4b);
            \draw[ArreteCube] (1) -- (3a);
            \draw[ArreteCube] (2) -- (4a);
            \draw[ArreteCube] (1) -- (2b);
            \draw[ArreteCube] (3) -- (4b);
            \node[Cercle] at (0, 3)  (1c) {};
            \node[Cercle] at (-1, 4) (2c) {};
            \node[Cercle] at (1, 4)  (3c) {};
            \node[Cercle] at (0, 5)  (4c) {};
            \draw[ArreteCube] (1c) -- (2c);
            \draw[ArreteCube] (1c) -- (3c);
            \draw[ArreteCube] (2c) -- (4c);
            \draw[ArreteCube] (3c) -- (4c);
            \draw[ArreteCube] (1a) -- (2c);
            \draw[ArreteCube] (3a) -- (4c);
            \draw[ArreteCube] (1b) -- (3c);
            \draw[ArreteCube] (2b) -- (4c);
            \node[Cercle] at (0, -4)  (000a) {};
            \node[Cercle] at (-1, -3) (100a) {};
            \node[Cercle] at (0, -3)  (010a) {};
            \node[Cercle] at (1, -3)  (001a) {};
            \node[Cercle] at (-1, -2) (110a) {};
            \node[Cercle] at (1, -2)  (011a) {};
            \node[Cercle] at (0, -2)  (101a) {};
            \node[Cercle] at (0, -1)  (111a) {};
            \draw[ArreteCube] (000a) -- (100a);
            \draw[ArreteCube] (000a) -- (010a);
            \draw[ArreteCube] (000a) -- (001a);
            \draw[ArreteCube] (100a) -- (110a);
            \draw[ArreteCube] (010a) -- (110a);
            \draw[ArreteCube] (010a) -- (011a);
            \draw[ArreteCube] (001a) -- (011a);
            \draw[ArreteCube] (100a) -- (101a);
            \draw[ArreteCube] (001a) -- (101a);
            \draw[ArreteCube] (101a) -- (111a);
            \draw[ArreteCube] (110a) -- (111a);
            \draw[ArreteCube] (011a) -- (111a);
            \node[Cercle] at (-3, -4)  (000b) {};
            \node[Cercle] at (-4, -3) (100b) {};
            \node[Cercle] at (-3, -3)  (010b) {};
            \node[Cercle] at (-2, -3)  (001b) {};
            \node[Cercle] at (-4, -2) (110b) {};
            \node[Cercle] at (-2, -2)  (011b) {};
            \node[Cercle] at (-3, -2)  (101b) {};
            \node[Cercle] at (-3, -1)  (111b) {};
            \draw[ArreteCube] (000b) -- (100b);
            \draw[ArreteCube] (000b) -- (010b);
            \draw[ArreteCube] (000b) -- (001b);
            \draw[ArreteCube] (100b) -- (110b);
            \draw[ArreteCube] (010b) -- (110b);
            \draw[ArreteCube] (010b) -- (011b);
            \draw[ArreteCube] (001b) -- (011b);
            \draw[ArreteCube] (100b) -- (101b);
            \draw[ArreteCube] (001b) -- (101b);
            \draw[ArreteCube] (101b) -- (111b);
            \draw[ArreteCube] (110b) -- (111b);
            \draw[ArreteCube] (011b) -- (111b);
            \node[Cercle] at (3, -4)  (000c) {};
            \node[Cercle] at (2, -3) (100c) {};
            \node[Cercle] at (3, -3)  (010c) {};
            \node[Cercle] at (4, -3)  (001c) {};
            \node[Cercle] at (2, -2) (110c) {};
            \node[Cercle] at (4, -2)  (011c) {};
            \node[Cercle] at (3, -2)  (101c) {};
            \node[Cercle] at (3, -1)  (111c) {};
            \draw[ArreteCube] (000c) -- (100c);
            \draw[ArreteCube] (000c) -- (010c);
            \draw[ArreteCube] (000c) -- (001c);
            \draw[ArreteCube] (100c) -- (110c);
            \draw[ArreteCube] (010c) -- (110c);
            \draw[ArreteCube] (010c) -- (011c);
            \draw[ArreteCube] (001c) -- (011c);
            \draw[ArreteCube] (100c) -- (101c);
            \draw[ArreteCube] (001c) -- (101c);
            \draw[ArreteCube] (101c) -- (111c);
            \draw[ArreteCube] (110c) -- (111c);
            \draw[ArreteCube] (011c) -- (111c);
            \node[Cercle] at (5, 0) (1d) {};
            \node[Cercle] at (6, 0) (2d) {};
            \node[Cercle] at (5, 1) (3d) {};
            \node[Cercle] at (6, 1) (4d) {};
            \draw[ArreteCube] (1d) -- (3d);
            \draw[ArreteCube] (1d) -- (4d);
            \draw[ArreteCube] (2d) -- (4d);
        \end{tikzpicture}
    }}}
    \subfigure[$(\EnsEq_{10}, \TamOrd)$]{\makebox[4cm]{\scalebox{.23}{
        \begin{tikzpicture}
            \node[Cercle] at (0, 0)  (000a) {};
            \node[Cercle] at (-1, 1) (100a) {};
            \node[Cercle] at (0, 1)  (010a) {};
            \node[Cercle] at (1, 1)  (001a) {};
            \node[Cercle] at (-1, 2) (110a) {};
            \node[Cercle] at (1, 2)  (011a) {};
            \node[Cercle] at (0, 2)  (101a) {};
            \node[Cercle] at (0, 3)  (111a) {};
            \draw[ArreteCube] (000a) -- (100a);
            \draw[ArreteCube] (000a) -- (010a);
            \draw[ArreteCube] (000a) -- (001a);
            \draw[ArreteCube] (100a) -- (110a);
            \draw[ArreteCube] (010a) -- (110a);
            \draw[ArreteCube] (010a) -- (011a);
            \draw[ArreteCube] (001a) -- (011a);
            \draw[ArreteCube] (100a) -- (101a);
            \draw[ArreteCube] (001a) -- (101a);
            \draw[ArreteCube] (101a) -- (111a);
            \draw[ArreteCube] (110a) -- (111a);
            \draw[ArreteCube] (011a) -- (111a);
            \node[Cercle] at (3, 0.5)  (000b) {};
            \node[Cercle] at (2, 1.5) (100b) {};
            \node[Cercle] at (3, 1.5)  (010b) {};
            \node[Cercle] at (4, 1.5)  (001b) {};
            \node[Cercle] at (2, 2.5) (110b) {};
            \node[Cercle] at (4, 2.5)  (011b) {};
            \node[Cercle] at (3, 2.5)  (101b) {};
            \node[Cercle] at (3, 3.5)  (111b) {};
            \draw[ArreteCube] (000b) -- (100b);
            \draw[ArreteCube] (000b) -- (010b);
            \draw[ArreteCube] (000b) -- (001b);
            \draw[ArreteCube] (100b) -- (110b);
            \draw[ArreteCube] (010b) -- (110b);
            \draw[ArreteCube] (010b) -- (011b);
            \draw[ArreteCube] (001b) -- (011b);
            \draw[ArreteCube] (100b) -- (101b);
            \draw[ArreteCube] (001b) -- (101b);
            \draw[ArreteCube] (101b) -- (111b);
            \draw[ArreteCube] (110b) -- (111b);
            \draw[ArreteCube] (011b) -- (111b);
            \draw[ArreteCube] (001a) -- (000b);
            \draw[ArreteCube] (111a) -- (110b);
            \draw[ArreteCube] (010b) -- (011a);
            \draw[ArreteCube] (100b) -- (101a);
            \node[Cercle] at (6, 0)  (000a2) {};
            \node[Cercle] at (5, 1) (100a2) {};
            \node[Cercle] at (6, 1)  (010a2) {};
            \node[Cercle] at (7, 1)  (001a2) {};
            \node[Cercle] at (5, 2) (110a2) {};
            \node[Cercle] at (7, 2)  (011a2) {};
            \node[Cercle] at (6, 2)  (101a2) {};
            \node[Cercle] at (6, 3)  (111a2) {};
            \draw[ArreteCube] (000a2) -- (100a2);
            \draw[ArreteCube] (000a2) -- (010a2);
            \draw[ArreteCube] (000a2) -- (001a2);
            \draw[ArreteCube] (100a2) -- (110a2);
            \draw[ArreteCube] (010a2) -- (110a2);
            \draw[ArreteCube] (010a2) -- (011a2);
            \draw[ArreteCube] (001a2) -- (011a2);
            \draw[ArreteCube] (100a2) -- (101a2);
            \draw[ArreteCube] (001a2) -- (101a2);
            \draw[ArreteCube] (101a2) -- (111a2);
            \draw[ArreteCube] (110a2) -- (111a2);
            \draw[ArreteCube] (011a2) -- (111a2);
            \node[Cercle] at (9, 0.5)  (000b2) {};
            \node[Cercle] at (8, 1.5) (100b2) {};
            \node[Cercle] at (9, 1.5)  (010b2) {};
            \node[Cercle] at (10, 1.5)  (001b2) {};
            \node[Cercle] at (8, 2.5) (110b2) {};
            \node[Cercle] at (10, 2.5)  (011b2) {};
            \node[Cercle] at (9, 2.5)  (101b2) {};
            \node[Cercle] at (9, 3.5)  (111b2) {};
            \draw[ArreteCube] (000b2) -- (100b2);
            \draw[ArreteCube] (000b2) -- (010b2);
            \draw[ArreteCube] (000b2) -- (001b2);
            \draw[ArreteCube] (100b2) -- (110b2);
            \draw[ArreteCube] (010b2) -- (110b2);
            \draw[ArreteCube] (010b2) -- (011b2);
            \draw[ArreteCube] (001b2) -- (011b2);
            \draw[ArreteCube] (100b2) -- (101b2);
            \draw[ArreteCube] (001b2) -- (101b2);
            \draw[ArreteCube] (101b2) -- (111b2);
            \draw[ArreteCube] (110b2) -- (111b2);
            \draw[ArreteCube] (011b2) -- (111b2);
            \draw[ArreteCube] (001a2) -- (000b2);
            \draw[ArreteCube] (111a2) -- (110b2);
            \draw[ArreteCube] (010b2) -- (011a2);
            \draw[ArreteCube] (100b2) -- (101a2);
            \node[Cercle] at (0, 4)  (0) {};
            \node[Cercle] at (1, 5)  (1) {};
            \node[Cercle] at (2, 4)  (2) {};
            \node[Cercle] at (0, 6)  (3) {};
            \node[Cercle] at (2, 6)  (4) {};
            \node[Cercle] at (1, 7)  (5) {};
            \node[Cercle] at (3, 5)  (6) {};
            \node[Cercle] at (3, 7)  (7) {};
            \draw[ArreteCube] (0) -- (1);
            \draw[ArreteCube] (2) -- (1);
            \draw[ArreteCube] (0) -- (3);
            \draw[ArreteCube] (2) -- (4);
            \draw[ArreteCube] (1) -- (5);
            \draw[ArreteCube] (3) -- (5);
            \draw[ArreteCube] (4) -- (5);
            \draw[ArreteCube] (6) -- (7);
            \draw[ArreteCube] (2) -- (6);
            \draw[ArreteCube] (4) -- (7);
            \node[Cercle] at (5, 4)  (0a) {};
            \node[Cercle] at (6, 5)  (1a) {};
            \node[Cercle] at (7, 4)  (2a) {};
            \node[Cercle] at (5, 6)  (3a) {};
            \node[Cercle] at (7, 6)  (4a) {};
            \node[Cercle] at (6, 7)  (5a) {};
            \node[Cercle] at (8, 5)  (6a) {};
            \node[Cercle] at (8, 7)  (7a) {};
            \draw[ArreteCube] (0a) -- (1a);
            \draw[ArreteCube] (2a) -- (1a);
            \draw[ArreteCube] (0a) -- (3a);
            \draw[ArreteCube] (2a) -- (4a);
            \draw[ArreteCube] (1a) -- (5a);
            \draw[ArreteCube] (3a) -- (5a);
            \draw[ArreteCube] (4a) -- (5a);
            \draw[ArreteCube] (6a) -- (7a);
            \draw[ArreteCube] (2a) -- (6a);
            \draw[ArreteCube] (4a) -- (7a);
            \node[Cercle] at (1, 8)  (1b) {};
            \node[Cercle] at (0, 9) (2b) {};
            \node[Cercle] at (2, 9)  (3b) {};
            \node[Cercle] at (1, 10)  (4b) {};
            \draw[ArreteCube] (1b) -- (2b);
            \draw[ArreteCube] (1b) -- (3b);
            \draw[ArreteCube] (2b) -- (4b);
            \draw[ArreteCube] (3b) -- (4b);
            \node[Cercle] at (4, 8)  (1c) {};
            \node[Cercle] at (3, 9) (2c) {};
            \node[Cercle] at (5, 9)  (3c) {};
            \node[Cercle] at (4, 10)  (4c) {};
            \draw[ArreteCube] (1c) -- (2c);
            \draw[ArreteCube] (1c) -- (3c);
            \draw[ArreteCube] (2c) -- (4c);
            \draw[ArreteCube] (3c) -- (4c);
            \node[Cercle] at (7, 8)  (1d) {};
            \node[Cercle] at (6, 9) (2d) {};
            \node[Cercle] at (8, 9)  (3d) {};
            \node[Cercle] at (7, 10)  (4d) {};
            \draw[ArreteCube] (1d) -- (2d);
            \draw[ArreteCube] (1d) -- (3d);
            \draw[ArreteCube] (2d) -- (4d);
            \draw[ArreteCube] (3d) -- (4d);
        \end{tikzpicture}
    }}}
    \subfigure[$(\EnsEq_{11}, \TamOrd)$]{\makebox[4cm]{\scalebox{.23}{
        \begin{tikzpicture}
            \node[Cercle] at (4, 0)  (000) {};
            \node[Cercle] at (3, 1) (100) {};
            \node[Cercle] at (4, 1)  (010) {};
            \node[Cercle] at (5, 1)  (001) {};
            \node[Cercle] at (3, 2) (110) {};
            \node[Cercle] at (5, 2)  (011) {};
            \node[Cercle] at (4, 2)  (101) {};
            \node[Cercle] at (4, 3)  (111) {};
            \draw[ArreteCube] (000) -- (100);
            \draw[ArreteCube] (000) -- (010);
            \draw[ArreteCube] (000) -- (001);
            \draw[ArreteCube] (100) -- (110);
            \draw[ArreteCube] (010) -- (110);
            \draw[ArreteCube] (010) -- (011);
            \draw[ArreteCube] (001) -- (011);
            \draw[ArreteCube] (100) -- (101);
            \draw[ArreteCube] (001) -- (101);
            \draw[ArreteCube] (101) -- (111);
            \draw[ArreteCube] (110) -- (111);
            \draw[ArreteCube] (011) -- (111);
            \node[Cercle] at (7, 1)  (000b) {};
            \node[Cercle] at (6, 2) (100b) {};
            \node[Cercle] at (7, 2)  (010b) {};
            \node[Cercle] at (8, 2)  (001b) {};
            \node[Cercle] at (6, 3) (110b) {};
            \node[Cercle] at (8, 3)  (011b) {};
            \node[Cercle] at (7, 3)  (101b) {};
            \node[Cercle] at (7, 4)  (111b) {};
            \draw[ArreteCube] (000b) -- (100b);
            \draw[ArreteCube] (000b) -- (010b);
            \draw[ArreteCube] (000b) -- (001b);
            \draw[ArreteCube] (100b) -- (110b);
            \draw[ArreteCube] (010b) -- (110b);
            \draw[ArreteCube] (010b) -- (011b);
            \draw[ArreteCube] (001b) -- (011b);
            \draw[ArreteCube] (100b) -- (101b);
            \draw[ArreteCube] (001b) -- (101b);
            \draw[ArreteCube] (101b) -- (111b);
            \draw[ArreteCube] (110b) -- (111b);
            \draw[ArreteCube] (011b) -- (111b);
            \draw[ArreteCube] (100) -- (100b);
            \draw[ArreteCube] (010) -- (010b);
            \draw[ArreteCube] (001) -- (001b);
            \draw[ArreteCube] (110) -- (110b);
            \draw[ArreteCube] (011) -- (011b);
            \draw[ArreteCube] (101) -- (101b);
            \draw[ArreteCube] (111) -- (111b);
            \draw[ArreteCube] (000) -- (000b);
            \node[Cercle] at (1, 0)  (000c) {};
            \node[Cercle] at (0, 1) (100c) {};
            \node[Cercle] at (1, 1)  (010c) {};
            \node[Cercle] at (2, 1)  (001c) {};
            \node[Cercle] at (0, 2) (110c) {};
            \node[Cercle] at (2, 2)  (011c) {};
            \node[Cercle] at (1, 2)  (101c) {};
            \node[Cercle] at (1, 3)  (111c) {};
            \draw[ArreteCube] (000c) -- (100c);
            \draw[ArreteCube] (000c) -- (010c);
            \draw[ArreteCube] (000c) -- (001c);
            \draw[ArreteCube] (100c) -- (110c);
            \draw[ArreteCube] (010c) -- (110c);
            \draw[ArreteCube] (010c) -- (011c);
            \draw[ArreteCube] (001c) -- (011c);
            \draw[ArreteCube] (100c) -- (101c);
            \draw[ArreteCube] (001c) -- (101c);
            \draw[ArreteCube] (101c) -- (111c);
            \draw[ArreteCube] (110c) -- (111c);
            \draw[ArreteCube] (011c) -- (111c);
            \node[Cercle] at (10, 0)  (000d) {};
            \node[Cercle] at (9, 1) (100d) {};
            \node[Cercle] at (10, 1)  (010d) {};
            \node[Cercle] at (11, 1)  (001d) {};
            \node[Cercle] at (9, 2) (110d) {};
            \node[Cercle] at (11, 2)  (011d) {};
            \node[Cercle] at (10, 2)  (101d) {};
            \node[Cercle] at (10, 3)  (111d) {};
            \draw[ArreteCube] (000d) -- (100d);
            \draw[ArreteCube] (000d) -- (010d);
            \draw[ArreteCube] (000d) -- (001d);
            \draw[ArreteCube] (100d) -- (110d);
            \draw[ArreteCube] (010d) -- (110d);
            \draw[ArreteCube] (010d) -- (011d);
            \draw[ArreteCube] (001d) -- (011d);
            \draw[ArreteCube] (100d) -- (101d);
            \draw[ArreteCube] (001d) -- (101d);
            \draw[ArreteCube] (101d) -- (111d);
            \draw[ArreteCube] (110d) -- (111d);
            \draw[ArreteCube] (011d) -- (111d);
            \node[Cercle] at (0, 4)  (0e) {};
            \node[Cercle] at (1, 5)  (1e) {};
            \node[Cercle] at (2, 4)  (2e) {};
            \node[Cercle] at (0, 6)  (3e) {};
            \node[Cercle] at (2, 6)  (4e) {};
            \node[Cercle] at (1, 7)  (5e) {};
            \node[Cercle] at (3, 5)  (6e) {};
            \node[Cercle] at (3, 7)  (7e) {};
            \draw[ArreteCube] (0e) -- (1e);
            \draw[ArreteCube] (2e) -- (1e);
            \draw[ArreteCube] (0e) -- (3e);
            \draw[ArreteCube] (2e) -- (4e);
            \draw[ArreteCube] (1e) -- (5e);
            \draw[ArreteCube] (3e) -- (5e);
            \draw[ArreteCube] (4e) -- (5e);
            \draw[ArreteCube] (6e) -- (7e);
            \draw[ArreteCube] (2e) -- (6e);
            \draw[ArreteCube] (4e) -- (7e);
            \node[Cercle] at (4, 4)  (0f) {};
            \node[Cercle] at (5, 5)  (1f) {};
            \node[Cercle] at (6, 4)  (2f) {};
            \node[Cercle] at (4, 6)  (3f) {};
            \node[Cercle] at (6, 6)  (4f) {};
            \node[Cercle] at (5, 7)  (5f) {};
            \node[Cercle] at (7, 5)  (6f) {};
            \node[Cercle] at (7, 7)  (7f) {};
            \draw[ArreteCube] (0f) -- (1f);
            \draw[ArreteCube] (2f) -- (1f);
            \draw[ArreteCube] (0f) -- (3f);
            \draw[ArreteCube] (2f) -- (4f);
            \draw[ArreteCube] (1f) -- (5f);
            \draw[ArreteCube] (3f) -- (5f);
            \draw[ArreteCube] (4f) -- (5f);
            \draw[ArreteCube] (6f) -- (7f);
            \draw[ArreteCube] (2f) -- (6f);
            \draw[ArreteCube] (4f) -- (7f);
            \node[Cercle] at (8, 4)  (0g) {};
            \node[Cercle] at (9, 5)  (1g) {};
            \node[Cercle] at (10, 4)  (2g) {};
            \node[Cercle] at (8, 6)  (3g) {};
            \node[Cercle] at (10, 6)  (4g) {};
            \node[Cercle] at (9, 7)  (5g) {};
            \node[Cercle] at (11, 5)  (6g) {};
            \node[Cercle] at (11, 7)  (7g) {};
            \draw[ArreteCube] (0g) -- (1g);
            \draw[ArreteCube] (2g) -- (1g);
            \draw[ArreteCube] (0g) -- (3g);
            \draw[ArreteCube] (2g) -- (4g);
            \draw[ArreteCube] (1g) -- (5g);
            \draw[ArreteCube] (3g) -- (5g);
            \draw[ArreteCube] (4g) -- (5g);
            \draw[ArreteCube] (6g) -- (7g);
            \draw[ArreteCube] (2g) -- (6g);
            \draw[ArreteCube] (4g) -- (7g);
            \node[Cercle] at (0, 7)  (0h) {};
            \node[Cercle] at (1, 8)  (1h) {};
            \node[Cercle] at (2, 7)  (2h) {};
            \node[Cercle] at (0, 9)  (3h) {};
            \node[Cercle] at (2, 9)  (4h) {};
            \node[Cercle] at (1, 10)  (5h) {};
            \node[Cercle] at (3, 8)  (6h) {};
            \node[Cercle] at (3, 10)  (7h) {};
            \draw[ArreteCube] (0h) -- (1h);
            \draw[ArreteCube] (2h) -- (1h);
            \draw[ArreteCube] (0h) -- (3h);
            \draw[ArreteCube] (2h) -- (4h);
            \draw[ArreteCube] (1h) -- (5h);
            \draw[ArreteCube] (3h) -- (5h);
            \draw[ArreteCube] (4h) -- (5h);
            \draw[ArreteCube] (6h) -- (7h);
            \draw[ArreteCube] (2h) -- (6h);
            \draw[ArreteCube] (4h) -- (7h);
            \node[Cercle] at (6, 7)  (1i) {};
            \node[Cercle] at (5, 8) (2i) {};
            \node[Cercle] at (7, 8)  (3i) {};
            \node[Cercle] at (6, 9)  (4i) {};
            \draw[ArreteCube] (1i) -- (2i);
            \draw[ArreteCube] (1i) -- (3i);
            \draw[ArreteCube] (2i) -- (4i);
            \draw[ArreteCube] (3i) -- (4i);
            \node[Cercle] at (10, 7)  (000j) {};
            \node[Cercle] at (10, 8)  (010j) {};
            \draw[ArreteCube] (000j) -- (010j);
        \end{tikzpicture}
    }}}
    \caption{Hasse diagrams of the first $(\EnsEq_n, \TamOrd)$ posets.}
    \label{FIGdiagInterEq}
\end{figure}

\subsection{Enumeration of balanced tree intervals}

Let us make use again of the synchronous grammars:

\begin{proposition} \label{PROserieGenIntEq}
    The generating series enumerating balanced tree intervals in the
    Tamari lattice according to the number of leaves of the trees is
    $G_{\textnormal{inter}}(x) := A(x, 0, 0)$ where
    \begin{equation}
        A(x, y, z) := x + A(x^2 + 2xy + z, x, x^3 + x^2y).
    \end{equation}
\end{proposition}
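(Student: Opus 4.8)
The plan is to follow the method of Proposition~\ref{PROserieGenArbresEqMax} verbatim: I would exhibit a synchronous grammar whose generated (completed) bud trees are in bijection with balanced tree intervals, and then read the functional equation off the grammar by the same translation. Recall that in that translation a production consisting of a node carrying two descendant buds $a$ and $b$ contributes the monomial $a\,b$, a bud renaming contributes its target variable, and summing the productions of each bud gives that bud's substitution; the leading $x$ is the terminating bud (a single leaf).

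First I would fix the encoding. By Theorem~\ref{THEintervalleHypercube} an interval $I=[T_0,T_1]$ is determined by its minimum $T_0$ together with the set of nodes at which a rotation is performed to reach $T_1$ (all orders soliciting the same set), and by Theorem~\ref{THE1} each such rotation is a conservative balancing rotation, so by the case analysis (B1) and (B2) each site $y$ with left child $x$ satisfies $(\Des_{T_0}(x),\Des_{T_0}(y))\in\{(-1,-1),(0,-1)\}$. I would encode $I$ by the tree $T_0$ with its rotation sites marked; as rotations preserve the shape and the leaf number, this respects the size statistic. The structural fact to isolate is that the sites of one interval are nonconflicting in exactly one way: two conservative balancing rotations cannot sit at a node and its own left child (rotating at either turns the other into an unbalancing rotation, via the table and Lemma~\ref{LEMhauteurEgaleEq}), whereas sites in disjoint subtrees, or at a node and a right descendant, never interfere. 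A height computation at a site shows the local shape is $(A\ArbCons B)\ArbCons C$ with $A,B,C$ balanced, in one of the two profiles $\Ht(A)=\Ht(B)=\Ht(C)$ (case B2) or $\Ht(A)=\Ht(C)=\Ht(B)+1$ (case B1).

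The grammar then uses the bud $x$ for a node of imbalance in $\{-1,0,1\}$, the delay bud $y$ as before, and a new bud $z$ to mark a rotation gadget. The bud $x$ produces the three imbalanced nodes exactly as in the balanced-tree grammar (imbalance $-1$ with children $x,y$; imbalance $0$ with children $x,x$; imbalance $+1$ with children $y,x$) and may additionally rename itself to $z$; the bud $y$ renames to $x$; and $z$ produces the two-level site shape $(\,\cdot\ArbCons\cdot\,)\ArbCons\cdot$ in its two profiles, namely profile B2 with all three slots $x$-buds (monomial $x^3$) and profile B1 with the short middle slot a $y$-bud (monomial $x^2y$). Reading off the substitutions gives $x\mapsto x^2+2xy+z$, $y\mapsto x$, $z\mapsto x^3+x^2y$, hence $A(x,y,z)=x+A(x^2+2xy+z,\,x,\,x^3+x^2y)$; setting $y=z=0$ discards bud trees still containing a delay bud $y$ or an unexpanded gadget bud $z$, leaving the completed interval encodings and yielding $G_{\textnormal{inter}}$.

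The heart of the argument, and the step I expect to be the main obstacle, is proving that this grammar is a bijection onto interval encodings. In one direction, the renaming $x\mapsto z$ offers an independent ``mark / do not mark'' choice at each $x$-bud, while the gadget commits its internal node $A\ArbCons B$ (the left child of the site) to a non-site; thus the grammar structurally forbids two sites at a parent and its left child---precisely the conflict isolated above---yet permits every other configuration, including sites at a right child or deep within $A$, $B$, or $C$. In the other direction, each legal independent set of sites is realizable: the height computation shows a site always presents the $(A\ArbCons B)\ArbCons C$ shape with balanced $A,B,C$ in profile B1 or B2, which the gadget reproduces, the delay bud $y$ being exactly what forces the short slot in B1. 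Finally, no double counting occurs because the derivation is forced: the imbalance values pin down the non-gadget productions and the marking pins down the gadgets, just as balanced trees admit a unique derivation in the base grammar.
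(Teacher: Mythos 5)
Your proposal is correct and takes essentially the same route as the paper's own proof: you encode each interval $[T_0,T_1]$ by its minimum $T_0$ with the roots of the needed conservative balancing rotations marked (using the disjointness from Theorem \ref{THEintervalleHypercube} to forbid a marked node with a marked left child), and you give exactly the paper's synchronous grammar, whose gadget bud $z$ produces the two marked-site profiles (B2) and (B1) with monomials $x^3$ and $x^2y$, yielding the substitutions $x \mapsto x^2+2xy+z$, $y \mapsto x$, $z \mapsto x^3+x^2y$ and the stated equation. Your additional discussion of why the grammar is in bijection with interval encodings only makes explicit what the paper leaves implicit.
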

\begin{proof}
    Let $I = [T_0, T_1]$ be a balanced tree interval. This interval can be
    encoded by the tree $T_0$ in which we mark the nodes which are roots
    of the conservative balancing rotations needed to transform $T_0$ into
    $T_1$. If a node $y$ of $T_0$ is marked, then its left child cannot be
    marked too because the rotations of the interval $I$ are disjoint (see
    the proof of Theorem \ref{THEintervalleHypercube}). To generate these
    objects, we use the following synchronous grammar that generates marked
    trees (the marked nodes are represented by a rectangle instead of a circle):
    \begin{eqnarray}
        \raisebox{0.65em}{\Bourgeon{.45}{$x$}} & \raisebox{0.75em}{$\longrightarrow$} &
                \BourgeonA{.45}{$x$}{-1}{$y$} ~\raisebox{0.75em}{$+$}~
                \BourgeonA{.45}{$x$}{$0$}{$x$} ~\raisebox{0.75em}{$+$}~
                \BourgeonA{.45}{$y$}{$1$}{$x$} ~\raisebox{0.75em}{$+$}~
                \raisebox{0.75em}{\Bourgeon{.45}{$z$}} \\
        \Bourgeon{.45}{$y$} & \raisebox{0.10em}{$\longrightarrow$} & \Bourgeon{.45}{$x$} \\
        \raisebox{1.1em}{\Bourgeon{.45}{$z$}} & \raisebox{1.2em}{$\longrightarrow$} &
                \BourgeonB{.45}{$0$}{$x$}{-1} ~~~\raisebox{1.2em}{$+$}~
                \BourgeonB{.45}{-1}{$y$}{-1}
    \end{eqnarray}
\end{proof}

The solution of this functional equation gives us the following first values
for the number of balanced tree intervals in the Tamari lattice:
$1$, $1$, $3$, $1$, $7$, $12$, $6$, $52$, $119$, $137$, $195$,
$231$, $1019$, $3503$, $6593$, $12616$, $26178$, $43500$, $64157$,
$94688$, $232560$, $817757$, $2233757$, $5179734$.

The interval $[T_0, T_1]$ is a \emph{maximal balanced tree interval}
if $T_0$ (resp. $T_1$) is a minimal (resp. maximal) balanced tree.

\begin{proposition} \label{PROserieGenIntArbresEqMax}
    The generating series enumerating maximal balanced tree intervals in
    the Tamari lattice according to the number of leaves of the trees is
    $G_{\textnormal{intermax}}(x) := A(x, 0, 0, 0)$ where
    \begin{equation}
        A(x, y, z, t) := x + A(x^2 + 2yz + t, x, yz + t, x^3 + x^2y).
    \end{equation}
\end{proposition}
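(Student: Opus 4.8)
The plan is to reuse the encoding behind Theorem~\ref{THEintervalleHypercube} and Proposition~\ref{PROserieGenIntEq}: a maximal balanced tree interval $[T_0,T_1]$ is recorded by the tree $T_0$ in which one marks the roots of the conservative balancing rotations transforming $T_0$ into $T_1$, these marks being disjoint in the sense that no marked node is the left child of a marked node. The novelty is that now $T_0$ must be minimal and $T_1$ maximal, so I would first convert these two global facts into local conditions on the marked tree $T_0$. By Proposition~\ref{PRO1}, $T_0$ is minimal exactly when the right child of every node of imbalance $1$ has imbalance $-1$. For the maximality of $T_1$, I would use the table of imbalance values preceding Lemma~\ref{LEMhauteurEgaleEq}: a conservative balancing rotation of root $y$ and left child $x$ modifies only the imbalances of $x$ and $y$, and in $T_1$ the node occupying the former place of $y$ has imbalance $1$. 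Hence the nodes of imbalance $-1$ of $T_1$ are exactly the nodes of imbalance $-1$ of $T_0$ that are unmarked and not the left child of a marked node, and $T_1$ avoids $P_{\textnormal{max}}$ if and only if, in $T_0$, the left child of each such node is either a marked node (which becomes a node of imbalance $1$ in $T_1$) or an unmarked node of imbalance $1$. In particular a left comb of several consecutive unmarked nodes of imbalance $-1$ is forbidden, disjointness making it impossible to mark enough of them.

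Next I would write a synchronous grammar generating precisely these marked trees. It uses a generic bud, tracked with $x$; a delaying bud, tracked with $y$ and rewritten to a generic bud; and a marking bud, tracked with $t$, whose two productions are the rotation-site shapes already used in Proposition~\ref{PROserieGenIntEq}, so that its translation is $x^3+x^2y$. To enforce the two local conditions found above I would add two forcing buds: one is placed on the right child of every node of imbalance $1$ and rewrites either to a node of imbalance $-1$, whose own left child is again forced and whose right child is delayed, or to a marking; the other is placed on the left child of every node of imbalance $-1$ and rewrites either to a node of imbalance $1$, whose own right child is again forced and whose left child is delayed, or to a marking. The generic bud then offers a node of imbalance $0$ with two generic children, a node of imbalance $-1$ with a forced left child and a delayed right child, a node of imbalance $1$ with a delayed left child and a forced right child, or a marking. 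The key point is that the two forcing buds are interchanged by the left--right reflection of trees, the involution that turns minimal trees into maximal trees and reverses the Tamari order; they are therefore enumerated by the same series and can be followed by one and the same variable $z$.

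Translating every rule into the product of the variables carried by the buds of its right-hand side, as in the previous propositions, and writing $z$ for both forcing buds yields the substitutions $x \mapsto x^2+2yz+t$, $y \mapsto x$, $z \mapsto yz+t$ and $t \mapsto x^3+x^2y$, that is the announced functional equation. Setting $y=z=t=0$ retains only the bud trees all of whose buds are generic, namely the completed trees, so that $G_{\textnormal{intermax}}(x)=A(x,0,0,0)$. I expect the main difficulty to lie in the faithfulness of the encoding of the second paragraph: one must verify that the local conditions of the first paragraph are not only necessary but also sufficient for $T_0$ to be minimal and $T_1$ to be maximal, ruling out in particular any forced cascade that disjointness could not absorb, and one must justify that collapsing the two forcing buds to a single variable does not alter the count, which amounts to checking that the two substitutions become equal upon identification and is explained conceptually by the mirror symmetry.
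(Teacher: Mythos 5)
Your proposal follows the paper's own proof essentially step for step: the same encoding of a maximal interval by the minimal tree $T_0$ with the roots of the conservative balancing rotations marked, the same translation (via Proposition \ref{PRO1}) of minimality of $T_0$ and maximality of $T_1$ into local conditions on the marked tree, the same synchronous grammar with two forcing buds (one for the right child of a node of imbalance $1$, one for the left child of a node of imbalance $-1$) plus the marking bud translating to $x^3+x^2y$, and the same identification of the two forcing buds into a single variable $z$, which the paper justifies only by saying they ``play the same role.'' Your derivation of the local avoidance conditions and your check that the two substitutions coincide after identification are, if anything, more explicit than the paper's own wording, so the proposal is correct and takes the same route.
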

\begin{proof}
    Let $I = [T_0, T_1]$ be a maximal balanced tree interval. This interval
    can be encoded by the minimal tree $T_0$ in which we mark the nodes
    which are roots of the conservative balancing rotations needed to
    transform $T_0$ into $T_1$. Since $T_1$ is a maximal balanced tree,
    by Proposition \ref{PRO1}, it avoids the tree patterns of $P_{\textnormal{max}}$,
    thus, the object which encodes $I$ must not have a node which is root
    of a conservative balancing rotation not marked if its parent or its
    left child is not marked. To generate these objects, we use the
    following synchronous grammar:
    \begin{eqnarray}
        \raisebox{0.65em}{\Bourgeon{.45}{$x$}} & \raisebox{0.75em}{$\longrightarrow$} &
                \BourgeonA{.45}{$x$}{$0$}{$x$} ~\raisebox{0.75em}{$+$}~
                \BourgeonA{.45}{$y$}{$1$}{$z_1$} ~\raisebox{0.75em}{$+$}~
                \BourgeonA{.45}{$z_2$}{-1}{$y$} ~\raisebox{0.75em}{$+$}~
                \raisebox{0.65em}{\Bourgeon{.45}{$t$}} \\
        \Bourgeon{.45}{$y$} & \raisebox{0.10em}{$\longrightarrow$} & \Bourgeon{.45}{$x$} \\
        \raisebox{0.65em}{\Bourgeon{.45}{$z_1$}} & \raisebox{0.75em}{$\longrightarrow$} &
                \BourgeonA{.45}{$z_2$}{-1}{$y$} ~\raisebox{0.75em}{$+$}~
                \raisebox{0.65em}{\Bourgeon{.45}{$t$}} \\
        \raisebox{0.65em}{\Bourgeon{.45}{$z_2$}} & \raisebox{0.75em}{$\longrightarrow$} &
                \BourgeonA{.45}{$y$}{$1$}{$z_1$} ~\raisebox{0.75em}{$+$}~
                \raisebox{0.65em}{\Bourgeon{.45}{$t$}} \\
        \raisebox{1.1em}{\Bourgeon{.45}{$t$}} & \raisebox{1.2em}{$\longrightarrow$} &
                \BourgeonB{.45}{$0$}{$x$}{-1} ~~~\raisebox{1.2em}{$+$}~
                \BourgeonB{.45}{-1}{$y$}{-1}
    \end{eqnarray}
    Note that the buds \Bourgeon{.45}{$z_1$} and \Bourgeon{.45}{$z_2$}
    play the same role so that the functional equation is simplified.
\end{proof}

The solution of this functional equation gives us the following first values
for the number of maximal balanced tree intervals in the Tamari lattice:
$1$, $1$, $1$, $1$, $3$, $2$, $2$, $6$,
$9$, $15$, $15$, $17$, $41$, $77$, $125$, $178$, $252$, $376$, $531$,
$740$, $1192$, $2179$, $4273$, $7738$, $13012$, $20776$, $32389$, $49841$,
$75457$, $113011$.

\bibliographystyle{plain}
\bibliography{Bibliographie}

\end{document}